\documentclass[12pt]{amsart}
\oddsidemargin=8pt
\evensidemargin=8pt
\textheight=582pt
\textwidth=450pt
  
\usepackage{fancyhdr}
\usepackage{stmaryrd}
\usepackage{calrsfs}
\DeclareMathAlphabet{\pazocal}{OMS}{zplm}{m}{n}

\pagestyle{fancy}
\fancyhf{}

\fancyhead[CE]{\fontsize{9}{11}\selectfont H. FOX, S. LUO}
\fancyhead[CO]{\fontsize{9}{11}\selectfont MONOCHROMATIC COMPONENTS WITH MANY EDGES IN RANDOM GRAPHS}
\fancyhead[LE,RO]{\thepage}


\usepackage{amsmath}
\usepackage[nospace,noadjust]{cite}
\usepackage{amsfonts}
\usepackage{amssymb,enumerate}
\usepackage{amsthm}
\usepackage{cite}
\usepackage{comment}
\usepackage{color}
\usepackage[all]{xy}
\usepackage{hyperref}
\usepackage{lineno}
\usepackage{stmaryrd}
\usepackage{varwidth}

\usepackage{mathtools}
\usepackage{bm}
\usepackage{graphicx}
\usepackage{psfrag}
\usepackage{url}

\usepackage{fancyhdr}
\usepackage{tikz-cd}

\usepackage{dirtytalk}
\usepackage{float}
\usepackage{mathrsfs}

\usepackage{todonotes}
\usepackage[capitalize,nameinlink]{cleveref}


\newtheorem*{maintheorem*}{Main Theorem}
\newtheorem{theorem}{Theorem}[section]
\newtheorem{prop}[theorem]{Proposition}
\newtheorem{question}[theorem]{Question}
\newtheorem{conj}[theorem]{Conjecture}

\theoremstyle{definition}

\numberwithin{equation}{section}


\newcommand{\EE}{\mathbb{E}}
\newcommand{\PP}{\mathbb{P}}

\hypersetup{
	pdftitle={FD},
	colorlinks=true,
	linkcolor=blue,
	citecolor=cyan,
	urlcolor=wine
}


\begin{document}
	
	\mbox{}
	\title{Monochromatic components with many edges in random graphs}

	\author{Hannah Fox, Sammy Luo$^*$}
    \thanks{$^*$Massachusetts Institute of Technology. Research supported by NSF Award No. 2303290. sammyluo@mit.edu}

	\begin{abstract}
		In an $r$-coloring of edges of the complete graph on $n$ vertices, how many edges are there in the largest monochromatic connected component? A construction of Gyárfás shows that for infinitely many values of $r$, there exist colorings where all monochromatic components have at most $\left(\frac{1}{r^2-r}+o(1)\right)\binom{n}{2}$ edges. Conlon, Luo, and Tyomkyn conjectured that components with at least this many edges are attainable for all $r \ge 3$. This was proven by Luo for $r=3$, along with a lower bound of $\frac{1}{r^2-r+\frac54}{n\choose 2}$
		for all $r\ge 2$, and by Conlon, Luo, and Tyomkyn for $r=4$.
  
		In this paper, we look at extensions of this problem where the graph being $r$-colored is a sparse random graph or a graph of high minimum degree. By extending several intermediate technical results from previous work in the complete graph setting, we prove analogues of the bound for general $r$ in both the sparse random setting and the high minimum degree setting, as well as the bound for $r=3$ in the latter setting.
	\end{abstract}
	\medskip

	\maketitle


\bigskip

\section{Introduction}
\label{sec:intro}


A classical observation of Erdős and Rado is that in any 2-coloring of the edges of the complete graph $K_n$, one of the two color classes consists of a single connected component on all $n$ vertices. 
Gy\'arf\'as \cite{gya} extended this result, proving that for $r\ge 2$, in any $r$-coloring of the edges of $K_n$, there is 
a monochromatic connected component with at least $\frac{n}{r-1}$ vertices. He also exhibited a construction showing that this bound is tight for 
arbitrarily large $n$ and infinitely many values of $r$. 

Since then, Gy\'arf\'as's result has been further generalized in many different ways. One notable example is due to Bal and DeBiasio \cite{bd}, who showed that a similar result holds when $K_n$ is replaced by an Erd\H{o}s-Rényi random graph $G(n,p)$, which is a random graph on $n$ vertices where each pair of vertices is adjacent with an independent probability of $p$.

\begin{theorem}[\cite{bd}]\label{thm:bd}
    For all $r\ge 2$ and sufficiently small $\varepsilon >0$, there exists $C$ such that for $p\ge \frac{C}{n}$, with high probability every $r$-coloring of $G(n,p)$ contains a monochromatic connected component with at least $(1-\varepsilon)\frac{n}{r-1}$ vertices.
\end{theorem}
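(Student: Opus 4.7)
The plan is to reduce to a deterministic statement about sufficiently pseudo-random host graphs, then verify that $G(n,p)$ is such a host graph with high probability. This decoupling of the probabilistic content from the combinatorial content mirrors the standard strategy for translating Ramsey-type results from $K_n$ to sparse random settings.

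First, I would formulate a pseudo-randomness condition tailored to this problem: for parameters $\delta, \eta > 0$, a graph $H$ on $n$ vertices is \emph{$(\delta,\eta)$-pseudo-random} if every vertex has degree at least $(1-\delta)np$, and for every pair of disjoint vertex sets $A, B$ with $|A|, |B| \ge \eta n$, the number of edges between $A$ and $B$ is at least $(1-\delta)p|A||B|$. Standard Chernoff bounds combined with a union bound over pairs $(A,B)$ show that for $p \ge C/n$ with $C = C(r,\varepsilon)$ sufficiently large, $G(n,p)$ is $(\delta,\eta)$-pseudo-random with high probability for suitable $\delta = \delta(\varepsilon,r)$ and $\eta = \eta(\varepsilon,r)$.

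Second, I would prove the monochromatic component bound for every $(\delta,\eta)$-pseudo-random graph, adapting Gyárfás's original argument for $K_n$. Gyárfás's proof iteratively exploits that some vertex has $\ge n/(r-1)$ neighbors in a single color class; in the pseudo-random setting this becomes: by the minimum degree condition and pigeonhole, every vertex has at least $(1-\delta)np/r$ neighbors in one color. This yields monochromatic ``seed'' stars of size $\Omega(np)$. To grow each seed into a large component, I would use the edge-expansion condition to argue that any two monochromatic subsets $X, Y$ of size $\ge \eta n$ in the same color that are \emph{not} connected must have all $\ge (1-\delta)p|X||Y|$ host-edges between them colored with one of the other $r-1$ colors, which upon careful counting contradicts the pigeonhole distribution of edges across the $r$ color classes.

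The main obstacle will be the last step: quantifying the loss precisely enough to recover the tight factor $(1-\varepsilon)$ rather than a weaker bound like $(1-c\sqrt{\varepsilon})$. Since Gyárfás's $K_n$-theorem is tight, any looseness in translating ``every pair of large sets has many edges between them'' into ``every pair of large monochromatic sets merges'' is immediately visible in the final bound. I expect this forces a careful iterative argument: produce an initial monochromatic component $C_1$ of size $\Omega(n)$ from a seed, show by the pseudo-randomness that any vertex outside $C_1$ with ``many'' neighbors in $C_1$ in the relevant color must in fact be in $C_1$, and then show that the set of vertices failing this has size at most $\varepsilon n / (r-1)$. Calibrating $\delta, \eta$ and $C$ so that all these losses compound to at most $\varepsilon n/(r-1)$ is the delicate part of the argument.
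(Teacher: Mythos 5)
This theorem is quoted by the paper from Bal and DeBiasio \cite{bd} and is not proved in the paper itself, so the comparison below is with the known argument rather than with a proof in this text. Your overall strategy (isolate a deterministic pseudo-randomness condition, verify it for $G(n,p)$, then adapt Gy\'arf\'as) is indeed the right general shape, and the part of your condition concerning edges between pairs of linear-sized sets (equivalently, the absence of large ``bipartite holes'') is exactly the property that the actual proof exploits. However, there are two concrete gaps. First, the minimum-degree clause of your pseudo-randomness condition is false in the stated regime: for $p=C/n$ with $C$ constant, the degree of a fixed vertex is roughly Poisson with mean $C$, so $G(n,p)$ contains $\Theta(e^{-C}n)$ isolated vertices with high probability, and no constant $C$ makes ``every vertex has degree at least $(1-\delta)np$'' hold whp. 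This is not a technicality to be absorbed into $\delta$; the linearly many atypical vertices are precisely the reason the theorem can only promise $(1-\varepsilon)\frac{n}{r-1}$, and any correct proof must explicitly set aside an exceptional vertex set rather than assume it is empty.

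Second, your growth mechanism has no bridge between the scales at which its two ingredients operate. The ``seed stars'' produced by pigeonhole on degrees have size $\Omega(np)=\Omega(C)$, i.e., constant size when $p=C/n$, whereas the expansion condition you propose only says anything about pairs of sets of size at least $\eta n$. Nothing in the sketch gets you from a constant-size monochromatic connected set to a linear-size one, and this is where the real work lies. Relatedly, even once one has linear-size monochromatic components, deriving the sharp constant $\frac{1}{r-1}$ for general $r$ is not a matter of ``careful counting'' of edges between two non-merging sets: Gy\'arf\'as's bound for $K_n$ rests on a covering/duality argument (tight against affine-plane colorings), and the content of Bal and DeBiasio's proof is a deterministic lemma extending that argument to host graphs with no large bipartite holes, applied to the graph obtained after discarding the exceptional vertices. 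As written, your proposal would at best yield a weaker constant, and at $p=C/n$ it does not yet yield a linear-size monochromatic component at all.
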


We can also ask analogous questions about the largest number of \emph{edges} in a monochromatic component.

\begin{question}\label{q:rcolor}
    If the edges of $K_n$ are colored with $r$ colors, what is the largest number of edges that some monochromatic component is guaranteed to have?
\end{question}
This question was first studied by Conlon and Tyomkyn \cite{ct}, who showed that in the case $r=2$, every such coloring contains a monochromatic component with at least $\frac{2}{9}n^2+O(n^{3/2})$ edges. Later work by Luo \cite{sluo} and by Conlon, Luo, and Tyomkyn \cite{clt} resolved the cases $r=3$ and $r=4$ respectively, showing that when the edges of $K_n$ are colored with $3$ (resp. $4$) colors, there always exists a monochromatic connected component with at least $\frac{1}{6}\binom{n}{2}$ (resp. $\frac{1}{12}\binom{n}{2}$) edges, and that these bounds are tight.

The lower bounds in the $r=3$ and $r=4$ cases match the upper bounds arising from a close examination of Gy\'arf\'as's construction, which shows that for infinitely many values of $r$ (in particular, whenever $r-1$ is a prime power), there is an $r$-coloring of $K_n$ where every monochromatic component has at most $(\frac{1}{r(r-1)}+o(1))\binom{n}{2}$ edges. Conlon, Luo, and Tyomkyn conjectured that a matching lower bound holds for every value of $r$.

\begin{conj}[\cite{clt}]\label{conj:rKn}
    For any positive integers $n,r$ with $r\ge 3$, in every $r$-coloring of the edges of $K_n$, there is some monochromatic connected component with at least $\frac{1}{r(r-1)}\binom{n}{2}$ edges.  
\end{conj}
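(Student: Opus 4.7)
The conjecture is open, so what follows is a plan rather than a proof. My approach is induction on $r$ combined with a stability/contradiction argument, building on the structural machinery developed for small $r$ in \cite{clt, sluo} and interpreted through the high-density host framework this paper establishes. Suppose for contradiction every monochromatic component has fewer than $M := \frac{1}{r(r-1)}\binom{n}{2}$ edges. Gy\'arf\'as's classical theorem supplies a monochromatic component $C$, say of color $1$, on at least $n/(r-1)$ vertices. Since $\binom{n/(r-1)}{2}$ exceeds $M$ only by a factor of $\frac{r}{r-1}$, the color-$1$ density on $V(C)$ is at most essentially $\frac{r-1}{r}$, so at least a $\frac{1}{r}$ fraction of pairs inside $V(C)$ must carry one of the other $r-1$ colors.

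The next step is to localize inside $V(C)$. The non-color-$1$ pairs on $V(C)$ form an $(r-1)$-edge-colored subgraph of $K_{|V(C)|}$ of density at least $\frac{1}{r}$, and one would like to apply the $(r-1)$-color version of the conjecture in this host. This is exactly the philosophy behind the high-density host results of the present paper, but the naive recursion runs into the same obstruction as in \cite{sluo}: it telescopes only to the weaker constant $\frac{1}{r^2-r+5/4}$ rather than the tight $\frac{1}{r(r-1)}$.

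Closing this gap is the main obstacle, and I expect it to require a stability step: any near-extremal $r$-coloring should be forced to resemble Gy\'arf\'as's affine-plane construction, with $r$ parallel classes of components each partitioning $V$ into $r-1$ near-cliques of size $n/(r-1)$. Concretely, I would try to prove a ``defect'' lemma: if all monochromatic components have at most $(1+\varepsilon)M$ edges, then the component structure is $O(\varepsilon)$-close to such an affine-plane coloring, after which a propagation argument would show that the rigid structure forces some monochromatic component of size strictly larger than $M$, delivering the contradiction. Because affine planes of order $r-1$ exist only when $r-1$ is a prime power, any purely structural argument is doomed; the perturbation analysis must be robust enough to handle orders for which no affine plane exists, exploiting that in those orders the construction itself is strictly sub-tight.

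As a parallel alternative, I would pursue a fractional/LP formulation: introduce variables for the relative sizes of the monochromatic components in each color and for their pairwise intersections across colors, write the constraints from connectivity, Gy\'arf\'as's bound, and the total edge count, and hunt for a dual certificate of value $\frac{1}{r(r-1)}$. The challenge is to identify the minimal constraint system whose LP-optimum equals $\frac{1}{r(r-1)}$ rather than the weaker $\frac{1}{r^2-r+5/4}$ produced by cruder counting; a careful study of the tight affine-plane extremizers and their near-extremal perturbations should suggest which higher-order correlations to include.
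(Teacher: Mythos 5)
The statement you were asked about is a \emph{conjecture} (due to Conlon, Luo, and Tyomkyn), and the paper contains no proof of it: it is known only for $r=3$ and $r=4$, and the paper's own contribution is the weaker bound $\frac{1}{r^2-r+\frac54}\binom{n}{2}$ and its extensions to random and dense hosts. You correctly identify this, and what you submit is a research plan rather than a proof. As such it cannot be accepted as a proof: both of your proposed routes leave their central step (the ``defect lemma'' forcing proximity to an affine-plane coloring, or the dual certificate of value $\frac{1}{r(r-1)}$) entirely unestablished, and you acknowledge as much.

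Beyond the admitted incompleteness, one concrete step in your sketch would fail quantitatively. After locating the Gy\'arf\'as component $C$ with $|V(C)|\ge \frac{n}{r-1}$ and deducing (correctly) that at least a $\frac1r$ fraction of the pairs inside $V(C)$ carry one of the other $r-1$ colors, you propose to recurse on this $(r-1)$-colored graph of density $\frac1r$ inside $K_{|V(C)|}$. But any density-type bound of the kind available here (e.g.\ \cref{thm:rpart-comp}) is quadratic in the edge density of the host subgraph, so the best one can extract from a color class with a $\frac{1}{r(r-1)}$ share of $\binom{|V(C)|}{2}$ is a component with on the order of $\frac{1}{r(r-1)^3}M$ edges, far below the target $M=\frac{1}{r(r-1)}\binom{n}{2}$; localizing inside the largest component loses too much. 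The arguments that actually achieve near-tight constants (in \cite{sluo} and in \cref{thm:main}) instead pass to the complete multipartite graph whose parts are the vertex sets of \emph{all} the majority-color components and apply \cref{thm:rpart-comp} there, which is a genuinely different decomposition. Your observation that a stability argument anchored to affine planes cannot work uniformly in $r$ (since affine planes of order $r-1$ need not exist) is a fair and important caveat, but it also means your first route has no identified extremal object to be stable around for general $r$. The conjecture remains open.
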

In \cite{sluo}, Luo proves a general lower bound that approximates the conjectured lower bound within a factor of $1-O(\frac{1}{r^2})$.

\begin{theorem}[\cite{sluo}]\label{thm:rKn}
In any $r$-coloring of the edges of $K_n$ for $r\ge 2$, there is a monochromatic connected component with at least $\frac{1}{r^2-r+\frac54}{n\choose 2}$ edges.
\end{theorem}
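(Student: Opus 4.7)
The plan is to argue by contradiction: assume every monochromatic component of the $r$-colored $K_n$ contains strictly fewer than $M:=\frac{1}{r^2-r+5/4}\binom{n}{2}$ edges, and derive a contradiction.

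First I would apply Gy\'arf\'as's vertex bound to locate a monochromatic component --- WLOG of color $1$ --- on a vertex set $U$ with $|U|\geq n/(r-1)$. Inside the induced complete graph on $U$, at most $M$ of its $\binom{|U|}{2}$ edges are color $1$ by assumption, so averaging over the other $r-1$ colors yields some color $c\neq 1$ with at least $\frac{1}{r-1}\bigl(\binom{|U|}{2}-M\bigr)$ edges inside $U$.

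Next, since the color-$c$ components partition $U$ into footprints $v_1,v_2,\ldots,v_\ell$ with $\sum_i v_i=|U|$, and the color-$c$ edges inside $U$ are bounded above by $\sum_i \binom{v_i}{2}$, the inequality $\sum_i\binom{v_i}{2}\geq \frac{1}{r-1}\bigl(\binom{|U|}{2}-M\bigr)$ combined with the convexity estimate $\sum_i\binom{v_i}{2}\leq \tfrac{1}{2}(\max_i v_i)\cdot |U|$ forces a color-$c$ component whose vertex footprint in $U$ has size at least roughly $|U|/(r-1)\geq n/(r-1)^2$.

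The main obstacle is converting this vertex-level gain into an edge-level lower bound, since a connected graph with $M$ edges can still have up to $M+1$ vertices if it is tree-like. I would handle this by feeding the nested footprint into the same argument --- treating it as a new ``$U$'' and repeating the averaging step --- or equivalently, by tracking how a deficit in color-$c$ edges inside a large footprint must be compensated by additional edges of other colors, reentering the same pigeonhole. Combining the two levels produces a quadratic inequality in $M$ of roughly the form $M\bigl((r-1)|U|+O(1)\bigr)\geq \binom{|U|}{2}$; optimizing over $|U|/n\geq 1/(r-1)$ and completing the square displays the quantity $r^2-r+\tfrac{5}{4}=(r-\tfrac{1}{2})^2+1$, with the $+1$ capturing the slack introduced by the vertex-to-edge conversion. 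Rearranging yields $M\geq \binom{n}{2}/(r^2-r+\tfrac{5}{4})$, contradicting the standing assumption and completing the proof.
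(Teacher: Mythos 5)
Your proposal has a genuine gap, and you have in fact named it yourself: converting a large vertex footprint into a large edge count is the entire difficulty of the problem, and the two devices you offer for it (iterating the pigeonhole on a nested footprint, or ``tracking how a deficit must be compensated'') are not arguments. A connected monochromatic component on $\Omega(n/(r-1)^2)$ vertices can be a tree and contribute only $O(n)$ edges, and re-running the averaging step inside its footprint only produces another vertex-level statement of the same kind; nothing in the sketch ever forces quadratically many edges into a single component. The inequality you write at the end, $M\bigl((r-1)|U|+O(1)\bigr)\ge\binom{|U|}{2}$, is linear (not quadratic) in $M$ and yields only $M\ge\binom{|U|}{2}/((r-1)|U|)\approx |U|/(2(r-1))=O(n)$, off from the target $\Theta(n^2)$ bound by a factor of $n$. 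The identity $r^2-r+\frac54=(r-\frac12)^2+1$ is asserted to ``appear'' rather than derived; that constant cannot be reverse-engineered from a completing-the-square heuristic, and has to emerge from a genuine optimization.

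The actual proof (\cref{thm:rKn} is quoted from \cite{sluo}; its argument is reprised almost verbatim in Section~4 and adapted in the proof of \cref{thm:main}) is structurally different and supplies exactly the missing edge-counting tool. The key lemma is \cref{thm:rpart-comp}: any subgraph $H$ of a complete multipartite graph $M$ has a connected component with at least $e(H)^2/e(M)$ edges, which rests on the nontrivial inequality $e(S,T)^2\ge 4e_M(S)e_M(T)$ of \cref{prop:rpart-cor}. One takes red to be the majority color, lets $z$ be the largest component's edge fraction, and applies the lemma to the restriction of red to the complete graph on $V(R_{j+1})\cup\cdots\cup V(R_k)$ for every $j$, controlling the sizes of all red components simultaneously; a smoothing lemma then bounds $\sum_i\binom{v(R_i)}{2}$, the number of edges lost on passing to the complete multipartite graph $M$ whose parts are the red components. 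Since $M$ carries no red edges, some other color occupies a $\frac{1}{r-1}$ fraction of it, and one more application of the key lemma produces a coupled system of inequalities in $x=e(R)/\binom{n}{2}$ and $z$; substituting $z=(r-\delta)^{-2}$ gives $\delta>\frac{r-1}{2r}$ and hence $z\ge\frac{1}{r^2-r+\frac54}$. If you want to salvage your sketch, the place to start is proving something of the strength of \cref{prop:rpart-cor}; without it, no amount of vertex-level pigeonholing will close the gap.
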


In analogy to the generalization of Gy\'arf\'as's original result to \cref{thm:bd}, in this paper we study a generalization of \cref{q:rcolor} for random graphs: In an $r$-coloring of the edges of the random graph $G(n, p)$, what lower bounds 
can we obtain (with high probability) for the number of edges in the largest monochromatic component?

The answer to this question naturally depends on our choice of the density $p$. We may ask if there is a \textit{threshold} for values of $p$ above which the answer to the above question about $G(n,p)$ starts to behave in a certain way with high probability, e.g. when our bounds on the number of edges in the largest monochromatic component start to behave similarly to the analogous bounds in the complete graph setting.

Our main result is an analogue of \cref{thm:rKn} in the setting of $G(n, p)$ for any $p= \omega(\frac{\log n}{n})$.
\begin{theorem} 
    \label{thm:main}
    For any fixed integer $r\ge 2$ and any $p=\omega(\frac{\log n}n)$, 
    $G:=G(n, p)$ satisfies the following property with high probability: In every $r$-coloring of the edges of $G$,    
    there is a monochromatic connected 
    component with at least $\frac{1}{r^2-r+\frac54}|E(G)|$ edges.
\end{theorem}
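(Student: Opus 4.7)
The plan is to transfer the proof of \cref{thm:rKn} from $K_n$ to the random graph $G(n,p)$ by re-executing its combinatorial construction on $G$, with each intermediate edge count from \cite{sluo} rewritten using concentration estimates for $G(n,p)$. The underlying structural analysis of an $r$-coloring is host-graph-independent, so the output of the construction will be essentially the same canonical vertex subsets, and the main work is showing that the edge counts are preserved up to a factor $(1\pm o(1))p$.

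The first step is a uniform concentration lemma: for any fixed $\varepsilon,\delta>0$ and $p=\omega(\log n/n)$, with high probability $G=G(n,p)$ satisfies
\begin{equation*}
|E_G(A,B)|=(1\pm\delta)p|A||B|,\qquad |E_G(A)|=(1\pm\delta)p\binom{|A|}{2},
\end{equation*}
simultaneously for all $A,B\subseteq V(G)$ with $|A|,|B|\ge\varepsilon n$; that $|E(G)|=(1\pm\delta)p\binom{n}{2}$; and that every vertex has $G$-degree $(1\pm\delta)pn$. The subset estimates follow from Chernoff together with a union bound over the at most $4^n$ relevant pairs (requiring only $p=\omega(1/n)$), while the degree and neighborhood-intersection estimates use $p=\omega(\log n/n)$ after a union bound over $n$ vertices.

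The second step is to re-execute the argument of \cite{sluo} with $G$ in place of $K_n$. The proof of \cref{thm:rKn} is essentially combinatorial: from the $r$-coloring it identifies a color class and canonical vertex subsets $V_1,\dots,V_k$ of linear size whose common monochromatic component has edge count bounded below by an expression of the form $\sum_i\binom{|V_i|}{2}+\sum_{(i,j)\in H}|V_i||V_j|$ for some auxiliary graph $H$, shown to exceed $\frac{1}{r^2-r+5/4}\binom{n}{2}$. Since the $V_i$ depend only on the coloring and not on which edges of $K_n$ are present, the same subsets arise for $G$; by the concentration lemma the corresponding monochromatic component of $G$ contains at least $(1-\delta)p\cdot\frac{1}{r^2-r+5/4}\binom{n}{2}$ edges, while $|E(G)|\le(1+\delta)p\binom{n}{2}$. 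Choosing $\delta$ arbitrarily small yields the desired ratio.

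The main obstacle is that the argument in \cite{sluo} almost certainly contains intermediate lemmas whose output sets are allowed to be sublinear in $n$, or which glue monochromatic pieces together via paths through specific edges of $K_n$. Sublinear sets defeat the subset-edge concentration; however, their total contribution to $\binom{n}{2}$ is itself sublinear and can be absorbed into the $\delta$ slack. The gluing steps are more delicate: each must be replaced by an argument showing that $G$ contains enough random edges (typically a path of length $1$ or $2$ between a vertex and a prescribed linear-sized set) to realize the connection, which follows from concentration of $|N_G(v)\cap S|$ for all $v$ and all linear-sized $S$, valid precisely when $p=\omega(\log n/n)$. The bulk of the technical labor is therefore in carefully adapting each intermediate lemma of \cite{sluo} to the random graph setting, matching the abstract's description of ``extending several intermediate technical results from previous work.''
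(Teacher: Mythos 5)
There is a genuine gap, and it sits at the heart of your plan. You write that the structural analysis of \cite{sluo} ``identifies canonical vertex subsets $V_1,\dots,V_k$'' which ``depend only on the coloring and not on which edges of $K_n$ are present, [so] the same subsets arise for $G$.'' This presupposes that the adversary colors $K_n$ and we restrict to $G$ --- but in \cref{thm:main} the adversary colors only the edges of $G$. There is no ambient coloring of $K_n$ to transfer from, and if you extend the coloring of $G$ to $K_n$ arbitrarily, a monochromatic component of the extension with many $K_n$-edges need not be connected in $G$: the edges realizing its connectivity may simply be absent from $G$, so its restriction can shatter into many small monochromatic components of $G$. Your proposed fix --- gluing pieces via short paths whose existence follows from concentration of $|N_G(v)\cap S|$ --- does not work, because the colors of those connecting edges are chosen by the adversary, not by you. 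In short, the quantities Luo's argument manipulates (the red components $R_i$, the multipartite graph $M$ they induce, the second color class inside $M$) must all be read off from the colored graph $G$ itself, and they are strongly correlated with which edges $G$ contains.

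The paper's proof therefore does not ``transfer'' the $K_n$ argument; it re-runs it inside $G$, which requires proving random-graph analogues of the two key density inequalities \emph{relative to} $G\cap M$ for adversarially chosen $M$, $S$, $T$: namely $e_{G\cap M}(S,T)^2\ge 4(1-\varepsilon)e_{G\cap M}(S)e_{G\cap M}(T)$ (\cref{thm:corollary}) and the statement that every dense subgraph $H$ of $G\cap M$ has a component with at least $(1-\varepsilon)e(H)^2/e(G\cap M)$ edges (\cref{thm:multipartite}). Your uniform concentration lemma is in the spirit of \cref{prop:density-control} and would give the first of these, but nothing in your proposal supplies the second, whose proof needs an additional ingredient you do not have: a statement (\cref{prop:degree-bound}) that in $G(n,p)$ every subgraph of average degree $\Omega(pn)$ has $\Omega(n)$ vertices, which is what rules out the possibility that the extremal monochromatic component lives on a sublinear vertex set where your subset-concentration estimates say nothing. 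Your remark that sublinear sets ``contribute sublinearly to $\binom{n}{2}$'' is a statement about $K_n$, not about the number of $G$-edges such a set can carry; controlling that is exactly the content of \cref{prop:degree-bound} and cannot be absorbed into slack without proof. As written, the proposal's main mechanism fails and the essential new lemmas are missing.
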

We prove this result by extending the argument used in proving \cref{thm:rKn}, including several technical lemmas, to the sparse random graph setting. Applying similar extensions yields a sparse random analogue of the lower bound for $r=3$ as well.

The paper proceeds as follows. We begin in \cref{sec:prelims} by providing some background on previous work on \cref{q:rcolor} for complete graphs, including some technical tools that we will adapt for the random graph setting. We also introduce some relevant results on random graphs that will be used later in the proofs. Then, in \cref{sec:Gnp}, we prove our main results on $G(n,p)$. We follow in \cref{sec:mindeg} by investigating analogues of our questions in the setting of graphs of high minimum degree and discussing how this setting relates to the sparse random setting. We conclude by discussing some potential future directions.
\bigskip

\section{Preliminaries} \label{sec:prelims}
All graphs studied in this paper are finite and simple.
For a graph $G$, let $E(G)$ and $V(G)$ denote the sets of edges and vertices of $G$, respectively, and write $e(G)=|E(G)|$ and $v(G)=|V(G)|$. 
Given subsets of vertices $S,T\subseteq V(G)$, let $G[S]$ denote the induced subgraph of $G$ on the vertex set $S$ (i.e. $G[S]$ is the graph on $S$ such that two vertices $v,w\in S$ are adjacent in $G[S]$ if and only if they are adjacent in $G$). Let $E_G(S)=E(G[S])$, $e_G(S)=|E_G(S)|$, and let $e_G(S,T)$ denote the number of ordered pairs $(s,t)\in S\times T$ such that $s,t$ are adjacent in $S$. We write $e(S,T)=e_G(S,T)$, $E(S)=E_G(S)$, and $e(S)=e_G(S)$ when the choice of $G$ is clear from context. Given two graphs $G$ and $M$ on the same set of vertices $V$, we write $G\cap M$ for the graph on vertex set $V$ whose edge set is $E(G)\cap E(M)$.

For real-valued functions $f$ and $g$, we write $f=o(g)$ or $f\ll g$ if $\lim_{n\to \infty}\frac{f(n)}{g(n)} = 0$, and we write $f=\omega(g)$ or $f\gg g$ if $\lim_{n\to\infty}\frac{f(n)}{g(n)} = \infty$. Unless otherwise noted, all such asymptotic notation will be in terms of the variable $n$ going to infinity.

To extend results about edge colorings of complete graphs to the random graph setting, we will frequently apply the following form of the Chernoff bound:
\begin{prop}[Chernoff bound]
    \label{prop:chernoff}
    Let $X=X_1+\cdots+X_n$, where the $X_i$ are i.i.d. Bernoulli random variables. Let $\mu=\EE[X]$. Then for any $\delta > 0$,
    \[
        \qquad \PP(X \ge (1+\delta) \mu)\le e^{-\delta^2 \mu/(2+\delta)},
    \]
    while for $\delta \in (0, 1)$,
    \[
        \PP(X \le (1-\delta) \mu)\le e^{-\delta^2 \mu/2}.
    \]
    In particular, for $\delta \in (0,1)$,
    \[\PP(|X-\mu|\ge \delta \mu)\le 2e^{-\delta^2 \mu/3}.\]
\end{prop}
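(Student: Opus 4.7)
The plan is to apply the standard Chernoff--Cram\'er moment generating function technique. For the upper tail, I would fix $t > 0$ and apply Markov's inequality to $e^{tX}$ to obtain
\[
\PP(X \ge (1+\delta)\mu) \le e^{-t(1+\delta)\mu}\, \EE[e^{tX}].
\]
Since the $X_i$ are i.i.d.\ Bernoulli with parameter $p = \mu/n$, independence gives $\EE[e^{tX}] = (1 - p + p e^t)^n = (1 + p(e^t-1))^n$, and the inequality $1 + x \le e^x$ bounds this by $e^{\mu(e^t-1)}$. Substituting back yields $\PP(X \ge (1+\delta)\mu) \le e^{\mu(e^t-1) - t(1+\delta)\mu}$.

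The next step is to optimize in $t$. Setting $t = \ln(1+\delta)$, which is positive since $\delta > 0$, produces
\[
\PP(X \ge (1+\delta)\mu) \le \left(\frac{e^\delta}{(1+\delta)^{1+\delta}}\right)^\mu = e^{-\mu\bigl((1+\delta)\ln(1+\delta) - \delta\bigr)}.
\]
To finish this tail, I would verify the elementary inequality $(1+\delta)\ln(1+\delta) - \delta \ge \frac{\delta^2}{2+\delta}$ for all $\delta > 0$: define $f(\delta) = (1+\delta)\ln(1+\delta) - \delta - \frac{\delta^2}{2+\delta}$, observe $f(0) = 0$, and confirm $f'(\delta) \ge 0$ by a short computation.

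For the lower tail the argument is symmetric: take $t < 0$ and apply the same Markov-plus-MGF computation, optimizing at $t = \ln(1-\delta)$ to obtain $\PP(X \le (1-\delta)\mu) \le e^{-\mu((1-\delta)\ln(1-\delta) + \delta)}$. The analogous analytic step is the inequality $(1-\delta)\ln(1-\delta) + \delta \ge \delta^2/2$ on $\delta \in (0,1)$, again a one-variable monotonicity argument. The ``in particular'' two-sided bound is then immediate: for $\delta \in (0,1)$ both $\frac{\delta^2}{2+\delta} \ge \frac{\delta^2}{3}$ and $\frac{\delta^2}{2} \ge \frac{\delta^2}{3}$ hold, so a union bound over the two tails produces the factor of $2$.

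The only real obstacle here is bookkeeping. The entire argument is textbook, and the only place where a genuine (but still short) computation is needed is in verifying the two real-analysis inequalities that convert the exact Kullback--Leibler-type exponents into the clean quadratic bounds in the statement.
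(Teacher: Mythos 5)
Your proposal is correct, and it is the canonical moment-generating-function proof: the paper states this Chernoff bound as a standard fact without supplying any proof, so there is nothing to diverge from. The two elementary inequalities you defer to a computation do check out (for the upper tail, $f'(\delta)\ge 0$ reduces to $(2+\delta)^3\ge 8(1+\delta)$, which is clear for $\delta\ge 0$), and the union-bound step giving the two-sided $2e^{-\delta^2\mu/3}$ form is exactly right.
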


\subsection{Upper bounds and Gy\'arf\'as's construction}
Gy\'arf\'as's construction \cite{gya} gives, for certain values of $r$, an upper bound corresponding to the conjectured lower bound in \cref{conj:rKn}, showing that the conjecture, if true, is tight for infinitely many values of $r$. We summarize the relevant details of the construction below.

The construction depends on the existence of a certain family of objects called affine planes. An affine plane of order $t$ consists of a set of $t^2$ points and $t^2+t$ lines, where every pair of points shares exactly one line, and the set of lines can be partitioned into $t+1$ classes (or ``directions'') of parallel lines, such that each pair of lines intersects in one point if they are not parallel (and no points if they are parallel). Affine planes of order $t$ are known to exist whenever $t$ is a prime power; it is open whether affine planes of any other order exist. 

When an affine plane of order $r-1$ exists and $n=(r-1)^2$, we can construct an $r$-coloring of $K_n$ as follows: 
The vertices are the points of an affine plane of order $r-1$, 
and the edge between any pair of vertices is colored based on the direction of the line 
between the corresponding points. Examples for $r-1=2$ and $r-1=3$ are shown below.
\begin{center}
    \includegraphics{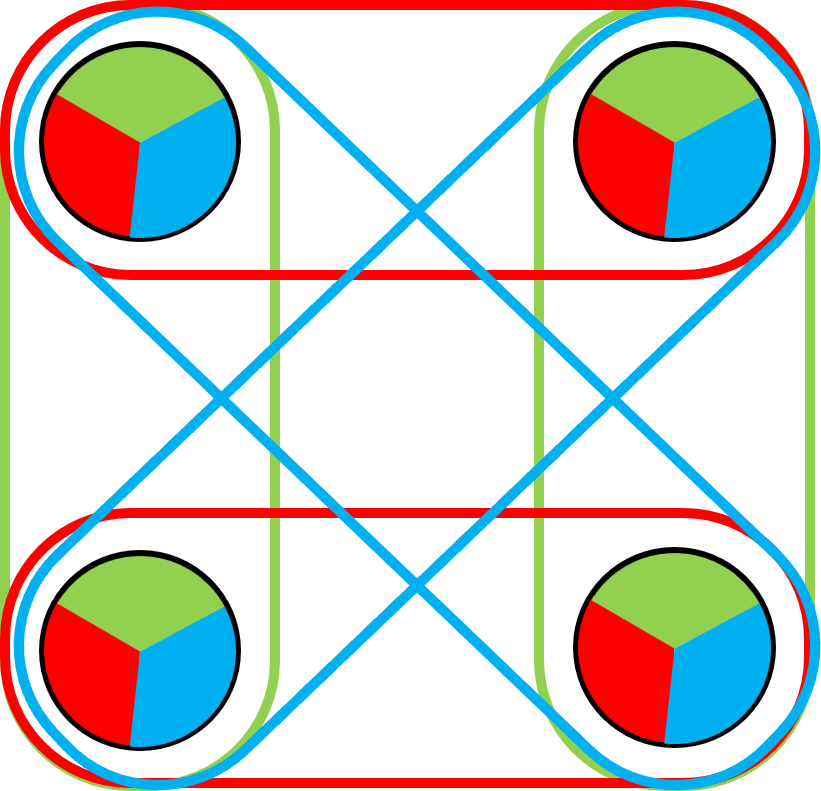}
    \includegraphics{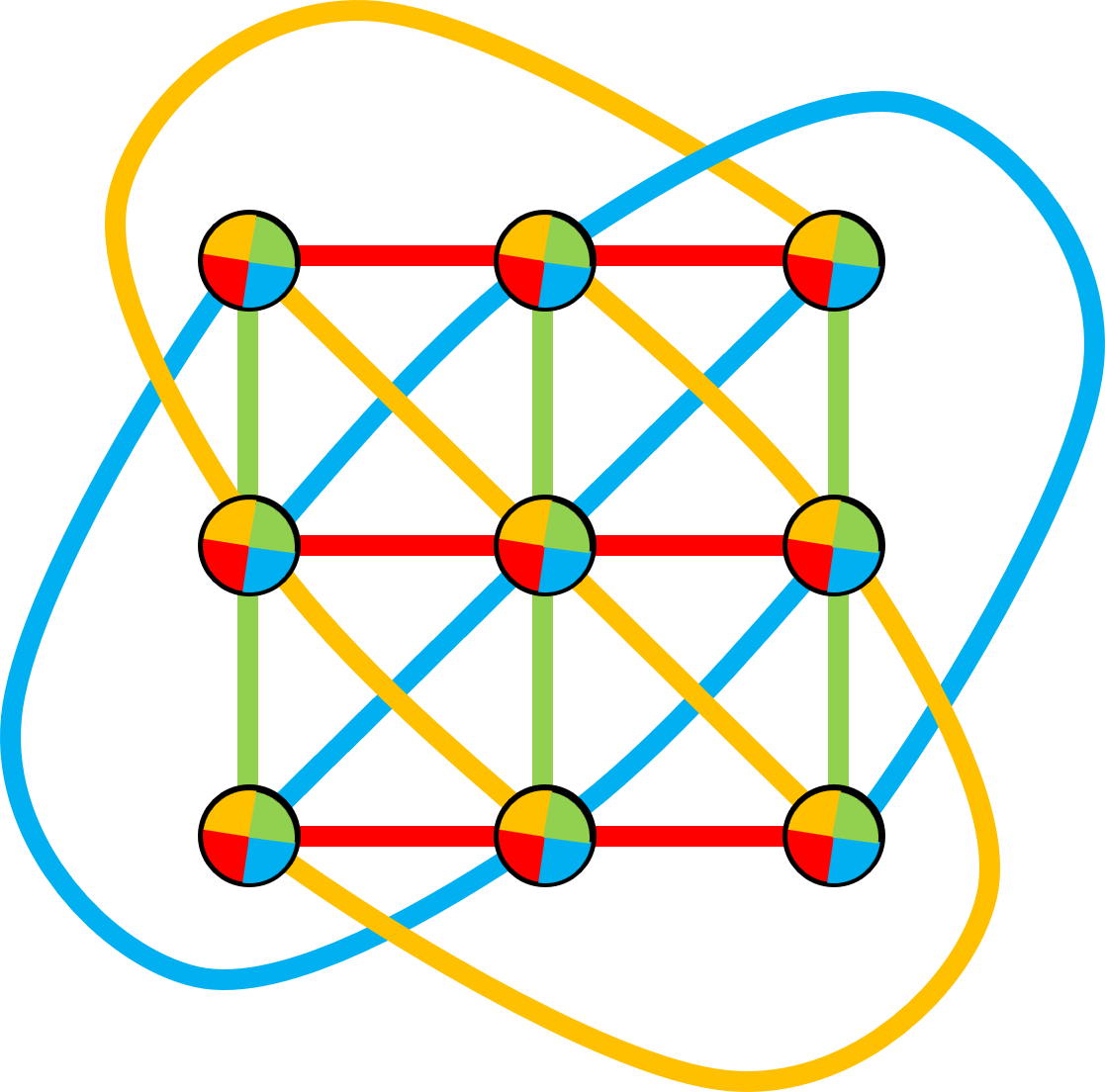}
\end{center}

To extend this coloring to larger values of $n$, we partition the vertices of $K_n$ into $(r-1)^2$ clusters, each corresponding to a point in the affine plane of order $r-1$.
For each edge between two distinct clusters, we color it based on the direction of the line between the two corresponding points in the affine plane.
Within each cluster, we color the edges such that each cluster has a roughly equal number of edges of each color.
Each connected component in a given color $k$ then consists of the clusters corresponding to the points on one line in the affine plane with the direction corresponding to $k$. The number of components then equals the number of lines in the affine plane of order $r-1$, which is $r(r-1)$. By symmetry, each component contains roughly the same number of edges, so each component contains roughly $\frac{1}{r^2-r}\binom{n}{2}$ edges.

We can use the same construction to show an analogous upper bound in the random graph setting, by identifying $G=G(n,p)$ as a subgraph of $K_n$ and taking the coloring induced by the $r$-edge-coloring of $K_n$ given by Gy\'arf\'as's construction. For fixed $r$ and large $n$, if $p$ is not too small (e.g. $p=\omega(1/n)$ suffices), 
each monochromatic connected component in the $r$-edge-coloring of $K_n$
will have, with high probability, close to a $p$ fraction of its edges remain in $G$, meaning that for any constant $\varepsilon>0$, with high probability, $G=G(n,p)$ will have the property that for some $r$-coloring of its edges, every monochromatic connected component has at most $(1+\varepsilon)p\frac{1}{r^2-r}\binom{n}{2}$ edges.

\subsection{Lower bounds in the complete graph setting}

An important tool in the proof of \cref{thm:rKn} in \cite{sluo} is the following inequality, which gives a density version of the relevant coloring results about sizes of connected components in subgraphs. Recall that a \emph{complete $k$-partite graph} is a graph on a vertex set $V=V_1\cup\cdots\cup V_k$, where the $V_i$ are pairwise disjoint, and a pair of vertices $v\in V_i$, $v'\in V_j$ is connected by an edge if and only if $i\neq j$. A graph $M$ is called a \emph{complete multipartite graph} if it is a complete $k$-partite graph for some $k\ge 2$.
\begin{theorem}[\cite{sluo}]
    \label{thm:rpart-comp}
    In a subgraph $H$ of a complete multipartite graph $M$, there must be a connected component 
    with at least $\frac{e(H)^2}{e(M)}$ edges.
\end{theorem}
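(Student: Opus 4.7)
Label the connected components of $H$ by $C_1,\dots,C_m$, with vertex sets $U_1,\dots,U_m$ and edge counts $c_i:=e(C_i)$; for each part $V_j$ of $M$, set $u_{ij}=|U_i\cap V_j|$. Because every $M$-edge joins two distinct parts, $c_i$ is bounded above by the number of $M$-edges on the vertex set $U_i$, namely $m_i:=\sum_{j<j'} u_{ij}u_{ij'}$, and the vertex-disjointness of the $U_i$ gives $\sum_i m_i\le e(M)$. The goal is to establish $(\sum_i c_i)^2\le e(M)\cdot \max_i c_i$, which rearranges to the desired inequality.

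I would first verify the bipartite special case $M=K_{n_1,n_2}$ in order to isolate the key Cauchy--Schwarz computation. Setting $a_i=u_{i1}$ and $b_i=u_{i2}$ so that $c_i\le a_ib_i$, $\sum_i a_i=n_1$, and $\sum_i b_i=n_2$, one obtains
\[
\Bigl(\sum_i c_i\Bigr)^{\!2}\le \Bigl(\sum_i a_i\Bigr)\Bigl(\sum_i \tfrac{c_i^2}{a_i}\Bigr)\le n_1\sum_i c_i b_i\le n_1n_2\max_i c_i=e(M)\max_i c_i,
\]
where the middle step uses $c_i/a_i\le b_i$ and the last uses $\sum_i b_i=n_2$. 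To treat the general multipartite setting, the plan is to apply this template more carefully. A natural first attempt is to apply the bipartite bound just obtained to each bipartition $(V_j,V\setminus V_j)$, viewing the edges of $M$ between them as $K_{n_j,n-n_j}$; this gives $\max_i c_i\ge d_H(V_j)^2/(n_j(n-n_j))$ for every $j$, where $d_H(V_j)$ is the number of $H$-edges incident to $V_j$. Combining these using the identities $\sum_j d_H(V_j)=2e(H)$ and $\sum_j n_j(n-n_j)=2e(M)$ together with Cauchy--Schwarz yields $\max_i c_i\ge 2e(H)^2/(k\,e(M))$, off by a factor of about $k$.

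The hard part will therefore be to remove this $k$-loss and recover the tight constant. The strategy I would pursue is a direct Cauchy--Schwarz on the $k$-partite structure, choosing per-component weights that simultaneously involve all of the coordinates $u_{i1},\dots,u_{ik}$ (rather than a single part as in the bipartite case), so that after summing over $i$ the cross-terms between different pairs of parts telescope precisely into $e(M)=\sum_{j<j'}n_jn_{j'}$ using $\sum_i u_{ij}=n_j$ for each $j$. The equality cases---$H=M$, or $H$ a disjoint union of complete multipartite subgraphs aligned with the parts of $M$---serve as sanity checks and should pin down the correct choice of weights, ensuring that the Cauchy--Schwarz produces $e(M)\max_i c_i$ exactly on the right-hand side.
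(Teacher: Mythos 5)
Your bipartite computation is correct, but for $M=K_{n_1,n_2}$ the statement is essentially an exercise: in your notation the needed inequality $e_M(S,V)^2\ge 4e_M(S)e_M(V)$ reduces to $(an_2+bn_1)^2\ge 4ab\,n_1n_2$, i.e.\ AM--GM. The entire content of the theorem is the general multipartite case, and there your proposal stops at ``the strategy I would pursue'': you correctly diagnose that the naive part-by-part reduction loses a factor of $k$, but the ``per-component weights whose cross-terms telescope'' are never exhibited, and finding them is not a routine Cauchy--Schwarz. Concretely, with $s_j=|S\cap V_j|$, $s=\sum_j s_j$, the weight that works is $f(S)=\frac12 e_M(S,V)=\frac12\sum_j s_j(n-n_j)$, and the inequality one must prove is
\[
\Bigl(st-\sum_j s_jt_j\Bigr)^{\!2}\ \ge\ \Bigl(s^2-\sum_j s_j^2\Bigr)\Bigl(t^2-\sum_j t_j^2\Bigr),
\]
which is a \emph{reverse} (Acz\'el-type) Cauchy--Schwarz inequality, not the forward one; this is exactly \cref{prop:rpart-cor}, which the paper flags as ``simple looking (yet still nontrivial).'' Your sanity-check equality cases will not by themselves produce this step.

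For comparison, the argument in \cite{sluo} (reproduced in the random setting in the proof of \cref{thm:multipartite}) runs: set $f_i=\frac12 e_M(U_i,V)$, note $\sum_i f_i\le e(M)$, pick by the mediant inequality a component with $c_{i^*}/f_{i^*}\ge e(H)/e(M)$, and then apply \cref{prop:rpart-cor} with $S=U_{i^*}$, $T=V$ to get $f_{i^*}\ge\sqrt{e_M(U_{i^*})\,e(M)}\ge\sqrt{c_{i^*}e(M)}$; combining yields $c_{i^*}\ge e(H)^2/e(M)$. So your overall skeleton (weighted averaging over components, then a per-component lower bound on the weight) is the right one, but the missing ingredient is precisely the nontrivial multipartite inequality, and without it the proof is incomplete.
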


This result, in turn, follows from the following even simpler looking (yet still nontrivial) inequality, which is Corollary~6 in \cite{sluo}.

\begin{prop}[\cite{sluo}]\label{prop:rpart-cor}
    Let $M$ be a complete multipartite graph. For any $S,T\subseteq V(M)$, we have
    \[
        e(S,T)^2\ge 4e_M(S) e_M(T).
    \]
\end{prop}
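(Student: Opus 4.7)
The plan is to reduce the inequality to a clean statement about two probability-like distributions on the parts of $M$ and then prove it by combining Cauchy--Schwarz with AM--GM. Let $V(M)=V_1\cup\cdots\cup V_k$ be the parts, and set $s_i=|S\cap V_i|$, $t_i=|T\cap V_i|$. A direct count gives
$$e_M(S)=\tfrac12\bigl(|S|^2-\textstyle\sum_i s_i^2\bigr),\qquad e_M(T)=\tfrac12\bigl(|T|^2-\textstyle\sum_i t_i^2\bigr),\qquad e_M(S,T)=\textstyle\sum_{i\ne j}s_i t_j=|S||T|-\sum_i s_i t_i.$$
If $|S|=0$ or $|T|=0$ the inequality is trivial, so assume both are positive and normalize by $p_i=s_i/|S|$, $q_i=t_i/|T|$, so that $\sum p_i=\sum q_i=1$. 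Writing $P=\sum p_i^2$, $Q=\sum q_i^2$, $R=\sum p_i q_i$, the desired inequality $e(S,T)^2\ge 4e_M(S)e_M(T)$ becomes, after dividing by $|S|^2|T|^2$,
$$(1-R)^2\ge (1-P)(1-Q).$$

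To prove this reduced inequality, first observe that $P,Q\in[0,1]$ and, by Cauchy--Schwarz applied to the sequences $(p_i)$ and $(q_i)$, $R\le\sqrt{PQ}\le 1$. Hence $1-R\ge 1-\sqrt{PQ}\ge 0$, and squaring both sides (both are nonnegative) yields
$$(1-R)^2\ge (1-\sqrt{PQ})^2=1-2\sqrt{PQ}+PQ.$$
By AM--GM, $2\sqrt{PQ}\le P+Q$, so $1-2\sqrt{PQ}+PQ\ge 1-P-Q+PQ=(1-P)(1-Q)$, which completes the chain.

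The whole argument is short, and I do not anticipate a real obstacle; the only subtlety is recognizing the right normalization so that the inequality becomes symmetric between $P$, $Q$, and $R$, after which the two classical inequalities (Cauchy--Schwarz to bound $R$ and AM--GM to bound $2\sqrt{PQ}$) combine in a transparent way. The edge cases ($|S|=0$, $|T|=0$, or $S$ entirely in one part so that $P=1$) all reduce to $0\ge 0$ and can be handled in one line.
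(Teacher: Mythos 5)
Your proof is correct. The counting identities for $e_M(S)$, $e_M(T)$, and $e_M(S,T)$ in terms of the part-intersections $s_i,t_i$ are right (note that $e_M(S,T)$ counts ordered pairs, which is exactly what makes the normalization by $|S|\,|T|$ give $1-R$), the reduction to $(1-R)^2\ge(1-P)(1-Q)$ is exact, and the chain $(1-R)^2\ge(1-\sqrt{PQ})^2\ge(1-P)(1-Q)$ is valid because $P,Q\le 1$ guarantees both quantities being squared are nonnegative. The paper itself does not prove this proposition but only cites Corollary~6 of \cite{sluo}; your argument is a clean, self-contained derivation of the same inequality (in effect, the reversed Cauchy--Schwarz inequality for the Lorentzian form $\langle u,v\rangle=|S||T|-\sum_i s_it_i$, verified elementarily), so it serves as a complete substitute for the citation.
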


In order to prove \cref{thm:main}, we will need to adapt the proofs of \cref{prop:rpart-cor} and \cref{thm:rpart-comp}, in order to extend them to the random graph setting.

\section{Proof of results on random graphs}
\label{sec:Gnp}

Our first aim is to prove an analogue of \cref{prop:rpart-cor} for random graphs.
 We begin with a series of technical results that are essentially direct applications of the Chernoff bound.

\begin{prop}
    \label{prop:density-control}
    For any choice of $p=p_n$ such that $p=\omega(\frac{\log n}{n})$ and any $\varepsilon=\varepsilon_1\in (0,1)$, the following holds as $n$ goes to infinity. 
     In $G=G(n, p)$, with high probability, for every graph $H$ on $V(G)$ satisfying $e(H)\ge \varepsilon \binom{n}{2}$ that is either a clique on a subset of vertices or a complete multipartite graph on $V(G)$,
    \[(1-\varepsilon)\mu \le e(H\cap G) \le (1+\varepsilon)\mu,\]
    where $\mu=\EE[e(H\cap G)]=pe(H)$.
\end{prop}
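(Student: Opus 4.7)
The plan is to apply the Chernoff bound (Proposition 2.1) to each fixed candidate $H$, and then absorb the exponential number of candidates via a union bound. For a fixed $H$ of either allowed form with $e(H)\ge\varepsilon\binom{n}{2}$, the random variable $e(H\cap G)$ is a sum of $e(H)$ i.i.d. Bernoulli$(p)$ variables with mean $\mu=pe(H)\ge \varepsilon p\binom{n}{2}$. The Chernoff bound then gives
\[
\PP\bigl(|e(H\cap G)-\mu|\ge \varepsilon\mu\bigr)\le 2\exp\!\bigl(-\varepsilon^2\mu/3\bigr)\le 2\exp\!\bigl(-\tfrac{\varepsilon^3}{3}\,p\tbinom{n}{2}\bigr).
\]
Since $p=\omega(\log n/n)$, we have $p\binom{n}{2}=\omega(n\log n)$, so this tail probability is $\exp(-\omega(n\log n))$.

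Next I would enumerate the candidate graphs $H$. A clique on a subset $S\subseteq V(G)$ is determined by $S$, so there are at most $2^n$ such cliques. A complete multipartite graph on $V(G)$ is determined by the partition of $V(G)$ into its parts, so the number of candidates is bounded by the Bell number $B_n\le n^n=e^{n\log n}$. Thus the total number of relevant $H$'s is at most $2^n+n^n\le 2n^n$.

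Taking a union bound, the probability that some admissible $H$ violates the concentration bound is at most
\[
2n^n\cdot 2\exp\!\bigl(-\tfrac{\varepsilon^3}{3}\,p\tbinom{n}{2}\bigr)\le 4\exp\!\bigl(n\log n - \omega(n\log n)\bigr),
\]
which tends to $0$. So with high probability the desired two-sided bound holds simultaneously for every admissible $H$.

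The main (and really only) delicate point is making sure the union bound is affordable: this requires the number of candidate $H$'s to be at most $\exp(o(p n^2))$, which is exactly why $p=\omega(\log n/n)$ is the natural threshold here. The slight subtlety is that the family of complete multipartite graphs on $V(G)$ is much larger than the family of cliques, but the crude Bell number bound $B_n\le n^n$ still gives $\log(\text{\# candidates})=O(n\log n)$, which is dominated by $\varepsilon^3 p\binom{n}{2}/3=\omega(n\log n)$. No subtler structural argument about $H$ is needed.
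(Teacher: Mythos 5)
Your proposal is correct and follows essentially the same route as the paper: apply the two-sided Chernoff bound to each fixed admissible $H$ (with $\mu\ge\varepsilon p\binom{n}{2}$ giving a tail of $\exp(-\omega(n\log n))$), bound the number of candidates by $2^n+n^n$, and finish with a union bound using $p=\omega(\log n/n)$. No issues.
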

\begin{proof}
    For any $H\subseteq E(K_n)$ and $\delta\in (0,1)$, the Chernoff bound yields
    \[
    \PP(|e(H\cap G)-\mu|\ge \delta \mu) \le e^{-\delta^2 \mu/3}.
    \]
    When $|H|\ge \varepsilon \binom{n}{2}$ and $\delta=\varepsilon$, this yields
    \[
        \PP(|e(H\cap G)-\mu|\ge \varepsilon \mu) \le \exp(-\varepsilon^2 p \varepsilon \binom{n}{2}/3)\le \exp(-(1+o(1))\varepsilon^3 p n^2/6).
    \]
    Since there are at most $2^n+n^n\le n^{2n}$ possible choices of $H$, the overall probability that there exists some valid choice of $H$ with $|e(H)-\mu|\ge \varepsilon \mu$ is then bounded above by
    \[
        \exp(-(1+o(1))\varepsilon^3 p n^2/6+2n\log n).
    \]
    For this probability to go to $0$, it suffices to have
    \[
        \varepsilon^3 p n^2 / (12n\log n) > 1.
    \]
    Since $p=\omega(\frac{\log n}{n})$, for large enough $n$ we have $\varepsilon > 3(\frac{\log n}{pn})^{1/3}$, which gives the desired inequality.
\end{proof}

\begin{prop}
    \label{prop:degree-bound}
    For any $c_0\in (0, 1)$, any choice of $p=p_n$ such that $p=\omega(\frac{\log n}n)$, and any $\varepsilon=\varepsilon_2\in (0,1)$, the following holds as $n$ goes to infinity. 
    In $G := G(n, p)$, with probability $1-o(\frac{1}{n})$, every subgraph $H$ with average degree $\bar d(H)\ge c_0 p(n-1)$ has $v(H)> \bar d(H)(1-\varepsilon)\frac{n}{p(n-1)}$.
\end{prop}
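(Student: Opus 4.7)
The plan is to reformulate the claim as a concentration bound on edge counts of induced subgraphs of $G$, and then apply Chernoff plus a union bound. Rearranging the conclusion $v(H) > \bar d(H)(1-\varepsilon) n/(p(n-1))$ yields $e(H) < v(H)^2 p(n-1)/(2n(1-\varepsilon))$. Since $e(H) \le e_G(V(H))$ and $\bar d(H) \le \bar d(G[V(H)])$, it suffices to prove the claim for induced subgraphs $G[S]$. Writing $k=|S|$ and $\mu_k = p\binom{k}{2}$, the goal becomes: with probability $1-o(1/n)$, no $S \subseteq V(G)$ simultaneously satisfies both \emph{(i)} $e_G(S) \ge c_0 p(n-1)k/2$ (the hypothesis transferred to $G[S]$) and \emph{(ii)} $e_G(S) \ge k^2 p(n-1)/(2n(1-\varepsilon))$ (the negation of the conclusion). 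Condition (i) forces $k \ge k_0 := \lceil c_0 p(n-1)\rceil + 1$, so the range of $k$ is $[k_0, n]$. Writing $T(k)$ for the maximum of these two right-hand sides, $T(k) = c_0 p(n-1)k/2$ when $k \le k^* := c_0 n(1-\varepsilon)$, and $T(k) = k^2 p(n-1)/(2n(1-\varepsilon))$ when $k \ge k^*$.

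Next I would apply \cref{prop:chernoff} with deviation $\delta_k := T(k)/\mu_k - 1$ to each $e_G(S) \sim \operatorname{Bin}(\binom{k}{2}, p)$ and take a union bound over $\binom{n}{k}$ choices of $S$ and over $k$. For $k \in [k^*, n]$, $\delta_k$ is a constant close to $\varepsilon/(1-\varepsilon)$ and $\mu_k = \Omega(pn^2)$, so the per-subset bound $\exp(-\delta_k^2 \mu_k/(2+\delta_k))$ is $\exp(-\Omega(\varepsilon^2 pn^2))$, which easily overcomes $\binom{n}{k} \le 2^n$ once $pn = \omega(\log n)$. For $k \in [k_0, k^*]$, $\delta_k = c_0(n-1)/(k-1) - 1$ grows as $k$ shrinks and can reach $\approx 1/p$ at $k=k_0$; using $\delta^2/(2+\delta) \ge \delta/3$ for $\delta \ge 1$ in the Chernoff exponent, the per-subset bound becomes $\exp(-\Omega(T(k))) = \exp(-\Omega(c_0 pnk))$, which dominates $\log \binom{n}{k} \le k\log(en/k)$ as long as $c_0 pn \gg \log(en/k)$, a condition that holds since $\log(en/k) \le \log(e/(c_0 p))$ and $c_0 pn = \omega(\log n)$.

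The main obstacle is the small-$k$ regime near $k_0 \approx c_0 pn$, where $\binom{n}{k_0}$ is exponentially large (roughly $\exp(c_0 pn \log(1/p))$). The saving grace is that condition (i) forces an atypically large deviation of $e_G(S)$: the threshold $T(k_0)$ exceeds $\mu_{k_0}$ by a factor of about $1/p$, so the Chernoff tail has exponent of order $c_0^2 p^2 n^2$, and this dominates the entropy by the factor $c_0 pn/\log(1/p)$, which tends to infinity under $p = \omega(\log n/n)$. Summing the resulting per-$k$ probabilities over $k \in [k_0, n]$ yields a total failure probability of $o(1/n)$, completing the argument.
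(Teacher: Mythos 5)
Your argument is correct and follows essentially the same route as the paper's: both reduce to induced subgraphs, split at $k\approx(1-\varepsilon)c_0 n$ (your crossover point of $T(k)$), handle large sets by standard Chernoff concentration of $e_G(S)$ around its mean, and handle small sets via the large-deviation regime in which the hypothesis $\bar d\ge c_0p(n-1)$ forces $e_G(S)$ far above its mean, beating the entropy $\binom{n}{k}$ precisely because $c_0pn=\omega(\log n)\gg\log(1/p)$. (One nitpick: for $k$ just below $k^*$ you have $\delta_k<1$, so the inequality $\delta^2/(2+\delta)\ge\delta/3$ does not literally apply there; but since $\delta_k$ is bounded below by a positive constant and $k=\Omega(n)$ in that sliver, the claimed per-subset bound $\exp(-\Omega(T(k)))$ still holds.)
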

\begin{proof}
    Condition on the high probability event in \cref{prop:density-control} with parameters $p=p_n$ and $\varepsilon_1=\min(\varepsilon,(1-\varepsilon)^2c_0^2)$ to be determined later. 
    It suffices to consider the induced subgraphs $H:=G[S]$ on vertex sets $S\subseteq V(G)$. Fix some such $S$, and let $s:=|S|$. If $s\ge (1-\varepsilon)c_0 n$, then $e_{K_n}(S)=\binom{s}{2}\ge (1-\varepsilon)^2 c_0^2\binom{n}{2}\ge \varepsilon_1 \binom{n}{2}$. As long as $n$ is large enough, \cref{prop:density-control} applied to the clique on $S$ yields that $e(H)=e_G(S)\le (1+\varepsilon)p\binom{s}{2}$, which means $\bar d(H)=\frac{2e(H)}{v(H)}\le (1+\varepsilon)p(s-1)$. Then $s\ge \frac{\bar d(H)}{(1+\varepsilon)p}\frac{s}{s-1}\ge (1-\varepsilon)\bar d(H) \frac{n}{p(n-1)}$, as desired.
    
    Thus it suffices to show that with high probability, every $S$ with $\bar d(G[S])\ge c_0 p(n-1)$ has $s:=|S|\ge (1-\varepsilon)c_0 n$. Fix some $S$ with $s<(1-\varepsilon)c_0 n$. Note that a subgraph with average degree at least $c_0 p(n-1)$ must have at least $c_0 p(n-1)+1$ vertices, so we can assume $s\ge c_0 p(n-1)+1\ge c_0 pn$. 
    By the Chernoff bound, we obtain that 
    \begin{align*}
        \PP(\bar d(H)\ge c_0 p(n-1))&\le \PP(e(H)\ge \frac{1}{2}sc_0 p(n-1)) \\
        &= \PP (e(H)\ge \frac{c_0 (n-1)}{s-1}\EE[e(H)]) \\
        &\le e^{-\frac{\left(\frac{c_0 (n-1)}{s-1}-1\right)^2}{\frac{c_0 (n-1)}{s-1}+1}p\binom{s}{2}} \le e^{-\frac{\varepsilon}{2-\varepsilon}\frac{\varepsilon c_0 n}{s-1}p\binom{s}{2}} \le e^{-\frac{\varepsilon^2}{4}(n-1)c_0sp}.
    \end{align*}
    For any fixed value of $s$, a union bound over all $S$ of size $s$ shows that the probability that $G$ has a subgraph on $s$ vertices with average degree more than $c_0 p(n-1)$ is at most
    \[{n\choose s}e^{-\frac{\varepsilon^2}{4}(n-1)c_0sp} \le e^{s(1+\log n - \log s)-\frac{\varepsilon^2}{4}(n-1)c_0sp} \le e^{s(1+\log n-\frac{\varepsilon^2}{4}(n-1)c_0p)}.\]
    Since $p=\omega(\frac{\log n}{n})$, for $n$ large enough such that $p>\frac{8}{\varepsilon^2 c_0}\frac{\log n}{n-1}$ and $\log n >1$, this last quantity is maximized when $s$ is as small as possible, at a value of $e^{c_0pn(1+\log n-\frac{\varepsilon^2}{4}(n-1)c_0p)}$.
    Taking a union bound over all possible values of $s<(1-\varepsilon)c_0 n$, the probability of any such subgraph existing is at most $ne^{c_0pn(1+\log n-\frac{\varepsilon^2}{4}(n-1)c_0p)} = e^{\log n + c_0pn(1+\log n-\frac{\varepsilon^2}{4}(n-1)c_0p)}$. Since $p=\omega(\frac{\log n}{n})$, for large enough $n$ this upper bound is $e^{-\omega((\log n)^2)}= o(\frac{1}{n})$, as claimed.
\end{proof}

We are now ready to prove the following random graph analogue of \cref{prop:rpart-cor}.
\begin{theorem}
	\label{thm:corollary}
    For any choice of $p=p_n$ such that $p=\omega(\frac{\log n}n)$, any constant $c \in (0, 1)$, and any $\varepsilon = \varepsilon_3\in (0,1)$, the following holds as $n$ goes to infinity. In $G:=G(n,p)$, with probability $1-o(\frac1n)$, the following holds for every complete multipartite graph $M$ on the vertices of $G$:
 
    For every pair of subsets $S, T\subseteq V(G)$ with $e_M(S, T) \ge cn^2$, in the graph $G'=G\cap M$, we have 
    \[
        e_{G'}(S,T)\ge (1-\varepsilon)pe_M(S,T),
    \]
    and furthermore,
    \[e_{G'}(S, T)^2\ge 4(1-\varepsilon)e_{G'}(S) e_{G'}(T).\]
\end{theorem}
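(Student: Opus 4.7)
The plan is to establish both inequalities by Chernoff concentration together with a union bound, leveraging the deterministic \cref{prop:rpart-cor} for the second. First I would bound the number of relevant triples $(M, S, T)$: there are at most $n^n$ complete multipartite structures on $V(G)$ (one part label per vertex) and $4^n$ choices of $(S, T)$, giving at most $e^{O(n \log n)}$ triples, so any single-event failure probability of the form $\exp(-\Omega(p n^2))$ will survive the union bound since $p n^2 = \omega(n \log n)$. For the first inequality, $e_{G'}(S, T)$ is a sum of i.i.d.\ $\{0, 1, 2\}$-valued random variables indexed by the edges of $M$ (each edge contributes $0$, $1$, or $2$ to the ordered count, depending on how its endpoints lie in $S$ and $T$), with mean $p\, e_M(S, T) \geq c p n^2$. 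A Chernoff/Bernstein-type bound then yields $\PP(e_{G'}(S,T) < (1-\varepsilon) p e_M(S,T)) \leq \exp(-\Omega(\varepsilon^2 c p n^2))$, which survives the union bound.

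For the second inequality, the main step is a uniform upper bound of the form $e_{G'}(S) \leq (1+\eta) p e_M(S) + \eta p n^2$ for a small $\eta = \eta(\varepsilon, c)$, established by a case split on the size of $e_M(S)$: when $e_M(S) \geq \eta n^2$, a multiplicative Chernoff bound gives $e_{G'}(S) \leq (1+\eta) p e_M(S)$; when $e_M(S) < \eta n^2$, an upper-tail Chernoff bound gives $e_{G'}(S) \leq 2\eta p n^2$. The failure probability in either case is $\exp(-\Omega(\eta^3 p n^2))$, which again survives the union bound over $(M, S)$.

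To combine, write $A = e_M(S)$, $B = e_M(T)$, $C = e_M(S,T) \geq c n^2$. The deterministic \cref{prop:rpart-cor} gives $C^2 \geq 4AB$. Using the uniform upper bound on $e_{G'}(S), e_{G'}(T)$ together with $A, B \leq n^2/2$, I would expand
\[
e_{G'}(S) e_{G'}(T) \leq p^2 \bigl[(1+\eta)^2 AB + O(\eta)\, n^4\bigr] \leq p^2 C^2 \left[\frac{(1+\eta)^2}{4} + O(\eta/c^2)\right],
\]
where the final step uses $4AB \leq C^2$ and $n^4 \leq C^2/c^2$ to absorb the additive slack into a multiple of $C^2$. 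The first inequality, reapplied with tolerance $\eta$, gives $e_{G'}(S,T)^2 \geq (1-\eta)^2 p^2 C^2$, so for $\eta$ small enough in terms of $\varepsilon$ and $c$ we have $(1+\eta)^2/4 + O(\eta/c^2) \leq (1-\eta)^2 / (4(1-\varepsilon))$, which yields the desired $e_{G'}(S,T)^2 \geq 4(1-\varepsilon) e_{G'}(S) e_{G'}(T)$.

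The main obstacle is the sensitivity of the second inequality to triples where $e_M(S)$ is tiny, since a pure multiplicative Chernoff concentration bound degenerates in that regime. The additive $\eta p n^2$ slack in the uniform upper bound on $e_{G'}(S)$ is the key fix, and it works precisely because the hypothesis $e_M(S,T) \geq c n^2$ already forces $e_{G'}(S,T)^2$ to be of order $p^2 n^4$, which comfortably absorbs the additive error.
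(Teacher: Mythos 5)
Your proposal is correct and follows the same overall skeleton as the paper's proof: a union bound over the $e^{O(n\log n)}$ triples $(M,S,T)$, a Chernoff bound for the first inequality, and then the deterministic \cref{prop:rpart-cor} combined with upper-tail concentration of $e_{G'}(S)$ and $e_{G'}(T)$ for the second. The one place you genuinely diverge is in how you control $e_{G'}(S)$, and your version is the more careful one. The paper deduces from $e_M(S,T)\ge cn^2$ that $|S|,|T|\ge cn$ and then writes $\PP(e_{G'}(S)>(1+\varepsilon/4)\EE[e_{G'}(S)])\le e^{-\varepsilon^2\EE[e_{G'}(S)]/48}\le e^{-\varepsilon^2 pc^2n^2/4}$; the second inequality in that chain silently requires $e_M(S)=\Omega(n^2)$, which does not follow from $|S|\ge cn$ (e.g.\ $S$ could lie inside a single part of $M$, making $e_M(S)$ zero or tiny, in which case the multiplicative bound degenerates exactly as you observe). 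Your case split — multiplicative Chernoff when $e_M(S)\ge\eta n^2$, an absolute upper-tail bound $e_{G'}(S)\le 2\eta pn^2$ otherwise — and the resulting additive slack $O(\eta)pn^2$, which is then absorbed into a multiple of $C^2$ via $n^4\le C^2/c^2$, is precisely what is needed to make the argument airtight in that regime; the paper's conclusion is still correct (when $e_M(S)$ is tiny the right-hand side $4(1-\varepsilon)e_{G'}(S)e_{G'}(T)$ is also tiny), but your write-up supplies the justification the paper omits. Your remark that $e_{G'}(S,T)$ is a sum of $\{0,1,2\}$-valued (rather than Bernoulli) contributions, requiring a Bernstein-type rather than literal Chernoff bound, is likewise a minor point the paper glosses over. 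The only cosmetic slip is that your case $e_M(S)<\eta n^2$ yields $e_{G'}(S)\le 2\eta pn^2$, which is the bound $(1+\eta)pe_M(S)+2\eta pn^2$ rather than $+\eta pn^2$ as stated; this changes nothing since the slack is still $O(\eta)pn^2$.
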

The first inequality is another simple Chernoff bound; we will prove the second inequality by arguing that, without the factor of $1-\varepsilon$, the inequality is true 
in expectation, and that with high probability, the actual values of both sides of the equation are close to their expected values.

\begin{proof}
    There exists a constant $d$ such that the number of ways to choose $M$, $S$, and $T$ is at most $e^{dn\log n}$. Fix such a choice of $M$, $S$, and $T$. 
    As in the proof of \cref{prop:density-control}, by the Chernoff bound, we have 
    \begin{align*}
        \mathbb{P}(e_{G'}(S, T)<(1-\varepsilon/4)\EE[e_{G'}(S, T)]) &\le e^{-\frac{\varepsilon^2\mathbb{E}[e_{G'}(S, T)]}{32}}\le e^{-\frac{\varepsilon^2 p c n^2}{32}}.
    \end{align*}
    Since $p=\omega(\frac{\log n}{n})$, taking a union bound over all choices of $M$, $S$, and $T$ gives an upper bound of
    \[
        e^{dn\log n} e^{-\frac{\varepsilon^2 p c n^2}{32}}=e^{-\omega(n\log n)}=o\left(\frac{1}{n}\right),
    \]
    so that with $1-o(\frac1n)$ probability,
    \[e_{G'}(S, T) \ge (1-\varepsilon/4) \EE[e_{G'}(S, T)]= (1-\varepsilon/4)pe_M(S,T),\]
    for all choices of $M$, $S$, and $T$ such that $e_M(S,T)\ge cn^2$, which proves the first inequality. By \cref{prop:rpart-cor}, $e_M(S, T)^2\ge 4e_M(S)e_M(T)$, so we also have that with $1-o(\frac1n)$ probability,
    \begin{align*}
        e_{G'}(S, T)^2 & \ge (1-\varepsilon/4)^2 p^2e_M(S, T)^2\ge (1-\varepsilon/4)^2 p^2 4e_M(S)e_M(T)\\
        &= 4(1-\varepsilon/4)^2 \mathbb{E}[e_{G'}(S)]\mathbb{E}[e_{G'}(T)],
    \end{align*}
    for all such $M$, $S$, and $T$. Note that $e_M(S,T)\ge cn^2$ implies $|S|,|T|\ge cn$, so that
    \[\mathbb{P}(e_{G'}(S) > (1+\varepsilon/4)\mathbb{E}[e_{G'}(S)]) \le 
    e^{-\frac{\varepsilon^2\mathbb{E}[e_{G'}(S)]}{48}}\le e^{-\frac{\varepsilon^2pc^2 n^2}{4}}=e^{-\omega(n\log n)},\]
    and similarly for $e_{G'}(T)$. 
    Therefore, again with $1-o(\frac1n)$ probability, we have
    \[e_{G'}(S, T)^2\ge \frac{(1-\varepsilon/4)^2}{(1+\varepsilon/4)^2}4e_{G'}(S)e_{G'}(T)\ge 4(1-\varepsilon)e_{G'}(S)e_{G'}(T),\]
    for all such choices of $M$, $S$, and $T$, as desired.
\end{proof}

Next, we prove the following random graph analogue of \cref{thm:rpart-comp}.
\begin{theorem}
	\label{thm:multipartite}
    For any choice of $p=p_n$ such that $p=\omega(\frac{\log n}n)$, any constants $c_1,c_2 \in (0, 1)$, and any $\varepsilon\in (0,1)$, the following holds as $n$ goes to infinity. With probability $1-o(\frac1n)$, in $G:=G(n, p)$, the following holds for every complete multipartite graph $M$ on $V(G)$ with minimum degree at least $c_1 n$:
    
    Let $G':=G\cap M$. Then every subgraph $H$ of $G'$ with $\frac{e(H)}{e(G')}\ge c_2$ has a connected component $H'$ with $e(H')\ge \frac{e(H)^2}{e(G')}(1-\varepsilon)$.
\end{theorem}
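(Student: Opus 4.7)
The plan is to mimic, in the random setting, the proof of \cref{thm:rpart-comp} from \cite{sluo}. In that argument one decomposes $H$ into components $C_1,\ldots,C_m$, writes $S_i=V(C_i)$, $a_i:=\sqrt{e(C_i)}$, and applies \cref{prop:rpart-cor} to each disjoint pair to get $e_M(S_i,S_j)\ge 2a_ia_j$; together with $e_M(S_i)\ge a_i^2$ this gives $e(M)\ge \sum_i a_i^2 + 2\sum_{i<j} a_ia_j = (\sum_i a_i)^2$, so $\max_i a_i^2\ge e(H)^2/e(M)$ follows from $\max_i a_i\cdot\sum_i a_i\ge \sum_i a_i^2$. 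My aim is to run this chain with $G'$ in place of $M$ and \cref{thm:corollary} in place of \cref{prop:rpart-cor}, losing only a $(1-O(\varepsilon))$ factor. The twist is that \cref{thm:corollary} applies only to pairs with $e_M(S_i,S_j)\ge cn^2$, so the main work is controlling components and pairs for which this hypothesis fails.

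I would choose small constants $\delta,\eta,c,c_0,\varepsilon_1,\varepsilon_2,\varepsilon_3>0$ depending on $\varepsilon,c_1,c_2$ (with $\eta,c\ll \delta^2$), and condition on the $1-o(1/n)$ events given by \cref{prop:density-control}, \cref{prop:degree-bound}, and \cref{thm:corollary}, supplemented by a companion uniform Chernoff bound giving $e_{G'}(S)\le (1+\varepsilon_1)pe_M(S)+O(n\log n)$ for every $S\subseteq V(G)$ (proved by union-bounding over $(M,S)$ as in the proof of \cref{prop:density-control}). Partition the components into three classes: (i) $|S_i|<\delta n$; (ii) $|S_i|\ge \delta n$ but $e_M(S_i)<\eta n^2$; and (iii) $|S_i|\ge \delta n$ and $e_M(S_i)\ge \eta n^2$. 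The \cref{prop:degree-bound} argument bounds each class-(i) component's average degree by $\max(c_0,\delta/(1-\varepsilon_2))\,p(n-1)$, so summing over vertex-disjoint $S_i$ gives $\sum_{(\text{i})}a_i^2=O(\max(c_0,\delta)\,pn^2)\le \varepsilon e(H)/10$. For class (ii), the companion concentration gives $a_i^2\le (1+\varepsilon_1)p\eta n^2+O(n\log n)$, and vertex-disjointness caps the class-(ii) count by $1/\delta$, so $\sum_{(\text{ii})}a_i^2\le (1+\varepsilon_1)p\eta n^2/\delta+o(pn^2)\le \varepsilon e(H)/10$ (using $p=\omega(\log n/n)$ and $\eta$ small). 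Denoting by $B$ the set of class-(iii) indices, $N_B:=\sum_{i\in B}a_i^2\ge (1-\varepsilon/5)e(H)$.

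For each pair $i<j$ in $B$, either $e_M(S_i,S_j)\ge cn^2$, in which case \cref{thm:corollary} gives $e_{G'}(S_i,S_j)\ge 2\sqrt{1-\varepsilon_3}\,a_ia_j$; or $e_M(S_i,S_j)<cn^2$, in which case \cref{prop:rpart-cor} forces $\sqrt{e_M(S_i)e_M(S_j)}\le cn^2/2$ and hence (via class-(iii) concentration) $a_ia_j\le (1+\varepsilon_1)pcn^2/2$. Combining,
\[
e(G')\ge \sum_{i\in B}e_{G'}(S_i)+\sum_{i<j\in B}e_{G'}(S_i,S_j)\ge \sqrt{1-\varepsilon_3}\Bigl(\sum_{i\in B}a_i\Bigr)^2-(1+\varepsilon_1)\,pcn^2\binom{|B|}{2},
\]
and since $|B|\le 1/\delta$ the error is $O(pcn^2/\delta^2)$, at most $\varepsilon e(G')/4$ once $c$ is of order $\varepsilon\delta^2c_1c_2$. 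Thus $\bigl(\sum_{i\in B}a_i\bigr)^2\le (1+\varepsilon/2)e(G')/\sqrt{1-\varepsilon_3}$, and finally $\max_i a_i^2\ge N_B^2/\bigl(\sum_{i\in B}a_i\bigr)^2\ge (1-\varepsilon)\,e(H)^2/e(G')$, as required. The main obstacle is the small-pair case $e_M(S_i,S_j)<cn^2$ within $B$: it is resolved only by combining the crude $O(1/\delta^2)$ bound on the number of such pairs (from vertex-disjointness of big components) with the factor-of-$p$ concentration that upgrades $\sqrt{e_M(S_i)e_M(S_j)}\le cn^2/2$ into $a_ia_j\le O(pcn^2)$, so the total small-pair contribution fits inside $\varepsilon e(G')$.
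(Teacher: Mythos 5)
Your argument is correct, but it takes a genuinely different route from the paper's. You run the ``sum over all pairs of components'' version of the deterministic proof, lower-bounding $e(G')$ by $\bigl(\sum_i\sqrt{e(C_i)}\bigr)^2$ up to a $1-O(\varepsilon)$ factor; this forces you to apply \cref{thm:corollary} to every pair of large components and to control three exceptional classes (vertex-small components, components sparse in $M$, and pairs of big components with $e_M(S_i,S_j)<cn^2$), and it also requires a supplementary uniform upper-tail bound $e_{G'}(S)\le(1+\varepsilon_1)pe_M(S)+O(n\log n)$ that is not stated in the paper but is routine to prove by the same union-bound-plus-Chernoff technique as \cref{prop:density-control}. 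The paper instead uses the generalized mediant inequality: setting $f(S)=\frac12 e_{G'}(S,V)$, some component $H'$ satisfies $e(H')/f(V(H'))\ge e(H)/e(G')$, and \cref{thm:corollary} is applied only to the single pair $(V(H'),V)$; since $M$ has minimum degree $c_1n$, the hypothesis $e_M(V(H'),V)\ge cn^2$ holds automatically once $v(H')\ge\frac{c}{c_1}n$, so the only exceptional case is the union $T$ of small components, which is dispatched with \cref{prop:degree-bound}. The paper's route therefore needs fewer case distinctions and no extra concentration lemma; yours stays closer to the original deterministic argument and makes the error bookkeeping transparent, with each loss visibly $O(\varepsilon)e(H)$ or $O(\varepsilon)e(G')$. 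Your key additional observation --- that for a pair with $e_M(S_i,S_j)<cn^2$, \cref{prop:rpart-cor} itself forces $e_M(S_i)e_M(S_j)\le c^2n^4/4$, which concentration upgrades to $a_ia_j=O(pcn^2)$, while vertex-disjointness caps the number of such pairs by $O(1/\delta^2)$ --- is exactly what is needed to close the argument, and I see no gap.
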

\begin{proof}
    Condition on the high probability events of \cref{prop:density-control}, \cref{prop:degree-bound}, and \cref{thm:corollary}, with parameters $c_0=\varepsilon(1-\varepsilon)^2\frac{c_1 c_2}{3}$, $c=\varepsilon(1-\varepsilon)^2\frac{c_1^2 c_2}{3}$, $\varepsilon_1=\varepsilon_2=\varepsilon_3=\varepsilon/3$. Let $H_1,\dots, H_k$ be the connected components of $H$. For any set $S\subset V:=V(G)$, let $f(S)=\frac12e_{G'}(S, V)$. In particular, we have $f(H_1)+\cdots+f(H_k)=f(V)=\frac{1}{2}e_{G'}(V,V)=e(G')$. 
    Then, by the generalized mediant inequality, for some $H'=H_\ell$, we have
    \[\frac{e(H')}{f(V(H'))} \ge \frac{e(H_1)+\cdots+e(H_k)}{f(V(H_1))+\cdots+f(V(H_k))} = \frac{e(H)}{e(G')}\ge c_2.\] 
    If $v(H')\ge \frac{c}{c_1}n$, then since $M$ has minimum degree $\delta(M)\ge c_1 n$, we obtain $e_M(V(H'),V)\ge cn^2$. \cref{thm:corollary} then yields
    \[
        e_{G'}(V(H'),V)^2\ge 4(1-\varepsilon_3)e_{G'}(V(H'))e_{G'}(V)=4(1-\varepsilon_3)e(H')e(G'),
    \]
    so that
    \[
        e(H')\ge \frac{e(H')}{f(V(H'))} f(V(H')) \ge \frac{e(H)}{e(G')}\sqrt{e(H')e(G')(1-\varepsilon_3)},
    \]
    which rearranges to give
	\[e(H')\ge \frac{e(H)^2}{e(G')}(1-\varepsilon_3)\ge \frac{e(H)^2}{e(G')}(1-\varepsilon),\]
    in the high probability event we have conditioned on, as desired. So it suffices to reduce to the case where $v(H')\ge \frac{c}{c_1}n$ for some $H'$ such that $\frac{e(H')}{f(V(H'))}$ is large. 
    
    Consider the vertex set
    \[
        T=\bigcup_{i:|V(H_i)|<\frac{c}{c_1}n}V(H_i).
    \]
    First suppose that $|T|\ge \frac{c}{c_1}n$. In this case, similarly to before, we obtain $e_M(T,V)\ge cn^2$, so that $e_{G'}(T,V))\ge (1-\varepsilon_3)pe_M(T,V)\ge (1-\varepsilon_3)p|T|c_1 n$. Then the average degree of the induced subgraph $H[T]$ is
    \[
        \bar d_{H}(T)=\frac{2e_{H}(T)}{|T|}\ge \frac{e_{H}(T)}{\frac{1}{2}e_{G'}(T,V)}(1-\varepsilon_3)pc_1 n.
    \]
    If $\frac{e_H(T)}{f(T)}\ge \frac{e(H)}{e(G')}\ge c_2$, then
    \[
        \bar d_{H}(T)\ge (1-\varepsilon_3)pc_1 c_2 n \ge c_0 pn,
    \]
    so that $H[T]$ contains a connected component $H_i$ with $\bar d(H_i)\ge c_0 pn$. By \cref{prop:degree-bound}, this implies $v(H_i)\ge \bar d(H_i)(1-\varepsilon_3)\frac{n}{p(n-1)}\ge (1-\varepsilon_3)^2 c_1 c_2 n>\frac{c}{c_1}n$,
    contradicting the definition of $T$. So we must have $\frac{e_H(T)}{f(T)}< \frac{e(H)}{e(G')}$. But then there is some connected component $H'$ with $V(H')\subseteq V\setminus T$ such that
    \[
        \frac{e(H')}{f(V(H'))} \ge \frac{e_{H}(V\setminus T)}{f(V\setminus T)} \ge \frac{e(H)}{e(G')},
    \]
    with $v(H')\ge \frac{c}{c_1}n$, so that we can conclude as before that $e(H')\ge \frac{e(H)^2}{e(G')}(1-\varepsilon)$, as desired.

    It remains to consider the case where $|T|<\frac{c}{c_1}n$. Since $\frac{c}{c_1}\ge (1-\varepsilon_2)c_0$, we must have $\bar d_{H}(T)<(1-\varepsilon_2)^{-1} \frac{c}{c_1}p(n-1)$, or else \cref{prop:degree-bound} leads to a contradiction. So $e(T)<\frac{c^2}{2(1-\varepsilon_2)c_1^2}pn(n-1)$. Then
    \begin{align*}
        \frac{e_{H}(V\setminus T)}{f(V\setminus T)}&\ge \frac{e(H)-e_{H}(T)}{f(V)}\ge \frac{e(H)}{e(G')}-\frac{\frac{c^2}{(1-\varepsilon_2)c_1^2}p\binom{n}{2}}{(1-\varepsilon_1)p\binom{n}{2}}\\
        &\ge \left(1-\frac{c^2}{(1-\varepsilon)^2c_1^2c_2}\right)\frac{e(H)}{e(G')} \ge (1-\varepsilon/3)\frac{e(H)}{e(G')}.
    \end{align*}
    We can then argue as before that for some connected component $H'$ of $H$ with $v(H')\ge \frac{c}{c_1}n$, we have $\frac{e(H')}{f(V(H'))}\ge (1-\varepsilon/3)\frac{e(H)}{e(G')}\ge (1-\varepsilon/3)c_2$. obtaining as before that $e_{G'}(V(H'),V)^2\ge 4(1-\varepsilon_3)e(H')e(G')$, we conclude that
    \[
        e(H')=\frac{e(H')}{f(V(H'))} f(V(H')) \ge(1-\varepsilon/3) \frac{e(H)}{e(G')}\sqrt{e(H')e(G')(1-\varepsilon_3)},
    \]
    which rearranges to give
    \[
        e(H')\ge (1-\varepsilon/3)^2(1-\varepsilon_3)\frac{e(H)^2}{e(G')}\ge (1-\varepsilon)\frac{e(H)^2}{e(G')},
    \]
    as desired. 
    Thus in every case, in the high probability event we have conditioned on, we obtain the desired bound.
\end{proof}

We finally have all the tools we need to prove our main result, \cref{thm:main}.

\begin{proof}[Proof of \cref{thm:main}]
    Let $z_0=\frac{1}{r^2-r+\frac{5}{4}}$, $c_1=\frac{1}{r+1}$, $c_2=\frac{1}{r-1}$, and $c_0=\frac{1-\varepsilon_0}{r}$, where $\varepsilon_0\le c_1(1-c_1)$ is some constant depending on $r$ to be chosen later. 
    For a given choice of $p=p_n=\omega(\frac{\log n}{n})$, condition on the high probability events in the conclusions of \cref{prop:density-control}, \cref{prop:degree-bound}, and \cref{thm:multipartite} with parameters $c_0,c_1,c_2$ as chosen above, and $\varepsilon_1=\varepsilon_2=\varepsilon=\varepsilon_0/3$. 
    In particular, we can assume that $(1-\varepsilon)p\binom{n}{2}\le  e(G)\le (1+\varepsilon)p \binom{n}{2}$.
    
    Fix an $r$-edge-coloring of $G$. Without loss of generality, let red be the color with the most edges. Let $R$ be the subgraph of $G$ consisting of all its red edges, so $e(R)\ge \frac{1}{r}e(G)\ge (1-\varepsilon)p\frac{1}{r} \binom{n}{2}$. Let $x=\frac{e(R)}{e(G)}\ge \frac{1}{r}$. Let the red connected components be $R_1,\dots,R_k$, with $v(R_1)\ge \cdots \ge v(R_k)$. Let $z$ be the fraction of the edges of $G$ that are in the monochromatic component with the most edges. It suffices to show that $z\ge z_0$, conditioned on the high probability events specified earlier. Note that for some $i\in [1,k]$ we have $\bar d(R_i)\ge \bar d(R)\ge \frac{1-\varepsilon}{r}p(n-1)\ge c_0 p(n-1)$, so that \cref{prop:degree-bound} yields $v(R_i)\ge (1-\varepsilon)^2\frac{n}{r}$, and thus $z\ge \frac{e(R_i)}{\binom{n}{2}}\ge \frac{(1-\varepsilon)^3}{r^2}\ge \frac{1-\varepsilon_0}{r^2}$.
    
    First, suppose that $v(R_1)\ge (1-\frac{1}{r+1})n$. Then by \cref{prop:density-control}, we have
    \[
        e(R_2)+\cdots+e(R_k)\le e_G(V(G)\setminus V(R_1))\le (1+\varepsilon)p \binom{\frac{1}{r+1} n}{2}\le (1+\varepsilon) p \frac{1}{(r+1)^2} \binom{n}{2},
    \]
    which means
    \[
    e(R_1)\ge (1-\varepsilon)p\frac{1}{r} \binom{n}{2} - (1+\varepsilon) p \frac{1}{(r+1)^2} \binom{n}{2} \ge \frac{1+\varepsilon}{r^2-r+\frac{5}{4}}p\binom{n}{2},
    \]
    for $r\ge 2$ and sufficiently small $\varepsilon$, so that $z\ge z_0$ as desired. 
    Thus, we can assume that $v(R_1)< (1-\frac{1}{r+1})n=(1-c_1)n$. 

    For $0\le j\le k-1$, let $G_j=G[V(R_{j+1})\cup\dots\cup V(R_k)]$ and $H_j=G_j\cap R$. Let $c=\sqrt{z\frac{1+\varepsilon_0}{1-\varepsilon_0}}$. Since $z\ge \frac{1-\varepsilon_0}{r^2}$, we have $c\ge c_0$. If any component $R_i$ has average degree at least $pc(n-1)\ge pc_0(n-1)$, then the assumption of \cref{prop:degree-bound} gives $v(R_j)> (1-\varepsilon)cn$, so that $e(R_j)> (1-\varepsilon_0)c^2 p\binom{n}{2}= z(1+\varepsilon_0)p\binom{n}{2}\ge ze(G)$, contradicting our choice of $z$. Thus we can assume every $R_i$, and hence every $H_j$, has average degree less than $pc(n-1)\le (1-\varepsilon_0)^{-1}p\sqrt{z}(n-1)$. In particular, $j=0$ gives
    \[
    x e(G)=e(R) < (1-\varepsilon_0)^{-1}p\sqrt{z} \binom{n}{2}\le \frac{\sqrt{z}}{(1-\varepsilon_0)^2}e(G),
    \]
    so that $z\ge (1-\varepsilon_0)^4x^2\ge (1-2\varepsilon_0)^2 r^{-2}$. Let $\delta=r-\frac{1-2\varepsilon_0}{\sqrt{z}}$, so $z=\frac{(1-2\varepsilon_0)^2}{(r-\delta)^2}$. If $\delta\ge 1$, then $z\ge z_0$ as long as we pick $\varepsilon_0$ small enough in terms of $r$, so we can assume $\delta<1$.
    More generally, since $e(R_i)\le ze(G)$ for $1\le i\le k$, we have for every $j$ that
    \[
        e(H_j)=\sum_{i=j+1}^k e(R_i)\ge (x-jz)e(G),
    \]
    meaning 
    \[
        (1-\varepsilon_0)^{-1}p\sqrt{z}(n-1) > \frac{2e(H_j)}{v(H_j)}\ge \frac{2(x-jz)e(G)}{n-\sum_{i=1}^j v(R_i)},
    \]
    which rearranges to give
    \begin{align*}
        \sum_{i=1}^j v(R_i)&< n-\frac{2(x-jz)e(G)}{(1-\varepsilon_0)^{-1}p\sqrt{z}(n-1)}\\
        &\le n (1-\frac{(1-\varepsilon_0)^2 (x-jz)}{\sqrt{z}})\le (1+2\varepsilon_0)(1-x/\sqrt{z}+j\sqrt{z})n.
    \end{align*}
	The same smoothing argument as in the proof of \cite[Lemma~3.2]{sluo} then yields
	\[\sum_{i=1}^j{{v(R_i)}\choose 2}\le \frac{n^2((1-x/\sqrt{z}+\sqrt{z})^2+(x/z-1)z)-n}2(1+5\varepsilon_0).\]
	Let $M$ be the complete $k$-partite graph on $V(G)$ with $V(R_1),\dots,V(R_k)$ as the independent sets.
	Because $G':=M\cap G$ has no red edges, one of the other colors (say, blue) has at least $\frac1{r-1}e(G')$ edges in $G'$. Let $H$ be the subgraph of $G'$ consisting of all its blue edges, so $\frac{e(H)}{e(G')}\ge \frac{1}{r-1}=c_2$. Since $v(R_1)<(1-c_1)n$, the minimum degree of $M$ is at least $c_1 n$. Then, by \cref{thm:multipartite}, there is a connected component $H'$ of $H$
	with $e(H')\ge (1-\varepsilon) \frac{e(H)^2}{e(G')} \ge (1-\varepsilon)\frac1{(r-1)^2}e(G')$. Since $e(M)\ge c_1 n (1-c_1) n \ge \varepsilon_0 \binom{n}{2}$, \cref{prop:density-control} yields $e(G')\ge (1-\varepsilon)pe(M)$, so
    \begin{align*}
        z&\ge (1-\varepsilon)\frac{1}{(r-1)^2}\frac{e(G')}{e(G)}\ge (1-\varepsilon)^2 \frac{1}{(r-1)^2}p \frac{e(M)}{e(G)}\\
        &\ge (1-\varepsilon)^2\frac{1}{(r-1)^2}(1-((1-x/\sqrt{z}+\sqrt{z})^2+(x/z-1)z)(1+5\varepsilon_0))\\
        &\ge (1-7\varepsilon_0)\frac{1}{(r-1)^2}(1-(1-x/\sqrt{z}+\sqrt{z})^2-(x/z-1)z).
    \end{align*}
    This rearranges to yield 
    \begin{align*}
        -\frac{(r-1)^2z}{1-7\varepsilon_0}&\le (1-x/\sqrt{z}+\sqrt{z})^2+(x/z-1)z-1\\
        &=\frac{1}{z}x^2-(1+2/\sqrt{z})x+2\sqrt{z}.
    \end{align*}
    For $\varepsilon_0$ small enough in terms of $r$, we have $\frac{z}{2}+\sqrt{z}\ge x\ge \frac{1}{r}$, so the right hand side is at most
    \[
        \frac{1}{zr^2}-(1+2/\sqrt{z})\frac{1}{r}+2\sqrt{z}.
    \]
    Recall that $z = \frac{(1-2\varepsilon_0)^2}{(r-\delta)^2}$ for some $\delta\in(0,1)$. Substituting this in yields
    \begin{align*}
        -\frac{(1-2\varepsilon_0)^2(r-1)^2}{(1-7\varepsilon_0)(r-\delta)^2} &\le \frac{(r-\delta)^2}{(1-2\varepsilon_0)^2r^2} - \frac{(1-2\varepsilon_0)+2(r-\delta)}{(1-2\varepsilon_0)r}+\frac{2(1-2\varepsilon_0)}{r-\delta},
    \end{align*}
    which can be rearranged to get 
    \begin{align*}
        0&\le \frac{(1-2\varepsilon_0)^4}{1-7\varepsilon_0}r^2(r-1)^2\\
        +&(r-\delta)^4+2(1-2\varepsilon_0)^3(r-\delta)r^2-(1-2\varepsilon_0)^2r(r-\delta)^2-2(1-2\varepsilon_0)r(r-\delta)^3 \\
        &\le (r-\delta^2)^2-r(r^2-\delta^2)(1-2\delta)+4\varepsilon_0 r(r-\delta)^2(r-\delta+1),
    \end{align*}
    and thus, taking $\varepsilon_0\le (32(r+1))^{-4}$,
    \[
        1-2\delta \le \frac{(r-\delta^2)^2}{r(r^2-\delta^2)}+\frac{1}{8r^3}< \frac{1}{r}+\frac{1}{8r^3},
    \]
    so that $\delta >\frac{r-1}{2r}-\frac{1}{16r^3}$, and thus
    \[
        z=\frac{(1-2\varepsilon_0)^2}{(r-\delta)^2}\ge \frac{1-32^{-3}r^{-4}}{r^2-r+\frac{1}{4}+1-\frac{1}{4r}+\frac{1}{4r^2}+2\frac{r}{16r^3}}.
    \]
    For $r\ge 2$, it can be verified that this expression is at least $\frac{1}{r^2-r+\frac{5}{4}}=z_0$, as desired.
\end{proof}
\section{High minimum degree argument}
We now turn to the closely related setting of graphs of high minimum degree. Unlike in the problem of finding monochromatic components with many vertices, for the edge version of the problem it suffices to assume a global condition of an edge density close to $1$, rather than a local condition about degrees. We obtain a result that approximates the bound in \cref{thm:rKn}.
\label{sec:mindeg}
\begin{theorem}
    Let $r\ge 4$ be an integer and let $\beta\in (0,\frac{1}{r-1})$. Then for any graph $G$ on $n$ vertices with at least $(1-\beta){n\choose 2}$ edges, every $r$-coloring of the edges of $G$ contains a monochromatic component with at least
    $\frac{(1-\beta)^2}{r^2-r+\frac{5}{4}}\binom{n}{2}$
    edges.
\end{theorem}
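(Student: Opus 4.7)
The plan is to adapt the proof of \cref{thm:main} to this deterministic high-density setting. Since $G$ is missing at most $\beta\binom{n}{2}$ edges overall, the analogues of \cref{prop:density-control}, \cref{prop:degree-bound}, and \cref{thm:corollary} become straightforward deterministic statements: for any vertex set $S$ one has $e_G(S)\ge \binom{|S|}{2}-\beta\binom{n}{2}$; every subgraph $H$ trivially satisfies $v(H)\ge \bar d(H)+1$; and for any complete multipartite $M$ on $V(G)$, the inequality $e_M(S,T)^2\ge 4e_M(S)e_M(T)$ from \cref{prop:rpart-cor} transports to $G':=G\cap M$ up to additive corrections of order $\beta\binom{n}{2}$. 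The multiplicative error factors $(1\pm\varepsilon)$ in the random-graph argument are thereby replaced by such additive $\beta$-corrections.

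With these tools, I would closely follow the structure of the proof of \cref{thm:main}. Let red denote the color class with the most edges, so $e(R)\ge e(G)/r\ge \tfrac{1-\beta}{r}\binom{n}{2}$; write $x=e(R)/\binom{n}{2}\ge (1-\beta)/r$ and let $z\binom{n}{2}$ denote the number of edges in the largest monochromatic component, so it suffices to prove $z\ge z_0:=\tfrac{(1-\beta)^2}{r^2-r+5/4}$. Assume for contradiction that $z<z_0$, and let $R_1,\dots,R_k$ be the red components with $v(R_1)\ge\dots\ge v(R_k)$. In the case $v(R_1)\ge (1-\tfrac{1}{r+1})n$, all red edges outside $R_1$ lie in $G[V(G)\setminus V(R_1)]$, which has at most $\binom{n/(r+1)}{2}\le \tfrac{1}{(r+1)^2}\binom{n}{2}$ edges, so $e(R_1)\ge \tfrac{1-\beta}{r}\binom{n}{2}-\tfrac{1}{(r+1)^2}\binom{n}{2}$; a routine check verifies this exceeds $z_0\binom{n}{2}$ for all $r\ge 4$ and $\beta\in (0,\tfrac{1}{r-1})$.

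Otherwise $v(R_1)<(1-\tfrac{1}{r+1})n$, and I would form the complete $k$-partite graph $M$ on vertex classes $V(R_1),\dots,V(R_k)$; this has minimum degree at least $\tfrac{n}{r+1}$, and $G':=G\cap M$ contains no red edges while satisfying $e(G')\ge e(M)-\beta\binom{n}{2}$. By pigeonhole over the $r-1$ remaining colors, some monochromatic (say blue) subgraph $H\subseteq G'\subseteq M$ has $e(H)\ge e(G')/(r-1)$, so \cref{thm:rpart-comp} yields a connected blue component $H'$ with $e(H')\ge \tfrac{e(H)^2}{e(M)}\ge \tfrac{(e(M)-\beta\binom{n}{2})^2}{(r-1)^2 e(M)}$. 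Meanwhile, combining $e(R_i)\le z\binom{n}{2}$ with the trivial bound $v(R_i)\ge \sqrt{2e(R_i)}$ shows $\bar d(R_i)\le \sqrt{z}\,n$ for every $i$; iterating over the graphs $H_j$ formed from the smaller red components, as in the proof of \cref{thm:main}, yields $\sum_{i\le j}v(R_i)\le n(1-(x-jz)/\sqrt z)+o(n)$, and the smoothing argument of \cite[Lemma~3.2]{sluo} then gives $e(M)/\binom{n}{2}\ge 1-(1-x/\sqrt z+\sqrt z)^2-(x/z-1)z-o(1)$.

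Combining $e(H')\le z\binom{n}{2}$ with these bounds yields a polynomial inequality in $x$ and $z$; substituting $x=(1-\beta)/r$ and $\sqrt z=(1-\beta)/(r-\delta)$ should reduce it to an inequality forcing $\delta>(r-1)/(2r)-O(r^{-3})$, and hence $z\ge z_0$. The main obstacle is this final algebraic verification, which is complicated relative to the proof of \cref{thm:main} by the fact that $\beta$ enters the key inequality in two distinct ways: through the reduced value of $x$ (replacing $1/r$ by $(1-\beta)/r$), and through the additive correction in the $(e(M)-\beta\binom{n}{2})^2/e(M)$ factor. Each of these contributes a factor of $(1-\beta)$ to $z_0$, and tracking these contributions carefully should produce the advertised bound $\tfrac{(1-\beta)^2}{r^2-r+5/4}\binom{n}{2}$.
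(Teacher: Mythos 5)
Your overall strategy matches the paper's: the paper likewise treats the coloring as a partial $r$-coloring of $K_n$ with at most $\beta\binom{n}{2}$ uncolored edges, runs the argument of \cref{thm:rKn} with $x\ge\frac{1-\beta}{r}$, and accounts for the uncolored edges of the multipartite graph $M$ via the pigeonhole bound $e(H)\ge\frac{1}{r-1}(e(M)-\beta\binom{n}{2})$. Two remarks on your setup. First, you import more of the random-graph machinery than is needed: since \cref{thm:rpart-comp} is a deterministic statement about arbitrary subgraphs of arbitrary complete multipartite graphs, there is no need for the case split on $v(R_1)\ge(1-\frac{1}{r+1})n$ or for a minimum-degree condition on $M$; the paper applies \cref{thm:rpart-comp} directly (including, trivially with $M=K_n$, to get $z\ge x^2$). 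Second, your $o(n)$ and $o(1)$ error terms are out of place in a statement asserted for every fixed $n$; the paper keeps the bounds exact (the smoothing lemma gives $\sum\binom{v(R_i)}{2}\le\frac{n^2(\cdots)-n}{2}$, and the $-n$ cancels against $\binom{n}{2}=\frac{n^2-n}{2}$).

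The genuine gap is that you stop exactly where the work is. The final step you defer --- ``substituting $x=(1-\beta)/r$ and $\sqrt z=(1-\beta)/(r-\delta)$ should reduce it to an inequality forcing $\delta>(r-1)/(2r)-O(r^{-3})$'' --- is the only place the hypotheses $r\ge 4$ and $\beta<\frac{1}{r-1}$ enter, and your argument never invokes either of them, which signals that the step has not actually been checked. Your heuristic that each of the two appearances of $\beta$ ``contributes a factor of $(1-\beta)$ to $z_0$'' is not what happens: the $(1-\beta)^2$ in the answer comes entirely from the substitution $z=\frac{(1-\beta)^2}{(r-\delta)^2}$, and the extra additive $+\beta$ coming from the uncolored edges of $M$ must then be shown to be harmless. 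Concretely, the paper's key inequality is
\[
-(r-1)^2 z \le (1-\beta)^2\frac{1}{zr^2}-\Bigl(1+\frac{2}{\sqrt z}\Bigr)\frac{1-\beta}{r}+2\sqrt z+\beta,
\]
and after substitution the entire $\beta$-dependent remainder is bounded, using $\beta\le\frac{1}{r-1}$, by $\beta r^2\bigl((r-\delta)^2-2(r-1)^2\bigr)$, which is negative precisely because $r-\delta\le r<\sqrt2\,(r-1)$ when $r\ge 4$. Dropping this negative term recovers $\delta>\frac{r-1}{2r}$ exactly as in the fully colored case, with no $O(r^{-3})$ loss, and hence $z\ge\frac{(1-\beta)^2}{(r-\frac12+\frac1{2r})^2}\ge\frac{(1-\beta)^2}{r^2-r+\frac54}$. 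Without carrying out this computation your proposal does not establish the theorem, and in particular gives no explanation of why $r=2,3$ are excluded from the statement.
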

\begin{proof}
    We adapt the proof of \cref{thm:rKn} in~\cite{sluo}, needing to make only a few adjustments. 
    Instead of considering an edge coloring of a subgraph of $K_n$, we essentially consider an edge coloring of $K_n$ that leaves up to a $\beta$ fraction of edges uncolored. Fix such a partial $r$-edge-coloring, let $z$ be the fraction of the edges of $K_n$ that are in the monochromatic component with the most edges, and let red be the color with the most edges. Let $R$ be the subgraph of $K_n$ consisting of all the edges colored red, and let $x=\frac{e(R)}{e(K_n)}\ge \frac{1-\beta}{r}$. By \cref{thm:rpart-comp} applied trivially with $M=K_n$, we have $z\ge x^2\ge \frac{(1-\beta)^2}{r^2}$.

    Let $R_1,\dots, R_k$ be the red connected components, with $v(R_1)\ge\cdots\ge v(R_k)$. For $j\in [0, k-1]$, let $G_j$ be the complete graph on $\bigcup_{i={j+1}}^k V(R_i)$, with its (partial) coloring induced by the coloring on $K_n$. Let $H_j=G_j\cap R$. 
    Applying \cref{thm:rpart-comp} with $M=G_j$ and $H=H_j$ yields a red component $R_i$ with $ze(K_n)\ge e(R_i)\ge \frac{e(H_j)^2}{e(G_j)}$. We have
    \[e(H_j) = \sum_{i=j+1}^k e(R_i) \ge (x-jz)e(K_n),\]
    while
    $e(G_j) = {{n-\sum_{i=1}^j v(R_i)}\choose 2}.$
    This is the exact same setup as in the proof of \cref{thm:rKn} given in \cite{sluo}. 
    Letting $v_i=\frac{V(R_i)}{n}$ and solving the optimization via \cite[Lemma~3.2]{sluo} yields
    \[\sum_{i=1}^m v_i^2\le (1-x/\sqrt{z}+\sqrt{ z})^2+(x/z-1)z ,\]
    so that
    \[\sum_{i=1}^m {{v(R_i)}\choose 2} \le \frac{n^2((1-x/\sqrt{z}+\sqrt{ z})^2+(x/z-1)z)-n}2.\]
    Let $M$ be the complete multipartite graph obtained by removing all edges within each $V(R_i)$ from $K_n$, with its (partial) coloring induced by the coloring on $K_n$. Then no edges of $M$ are colored red, and at most $\beta e(K_n)$ edges of $M$ are uncolored, so by the pigeonhole principle, some color class $H$ in $M$ has $e(H)\ge \frac1{r-1}(e(M)-\beta e(K_n))\ge \frac{1}{r-1}e(K_n)(1-\beta - ((1-x/\sqrt{z}+\sqrt{ z})^2+(x/z-1)z))$. By \cref{thm:rpart-comp}, some component $H'$ of $H$ has $e(H')\ge \frac{e(H)^2}{e(M)}\ge \frac{1}{r-1}e(H)$. Therefore,
    \[z\ge \frac{e(H')}{e(K_n)}\ge \frac{1}{(r-1)^2}(1-\beta - ((1-x/\sqrt{z}+\sqrt{ z})^2+(x/z-1)z)).\]
    Applying the same sequence of rearrangements and simplifications as in the proof of \cref{thm:rKn} (which are very similar to those in the proof of \cref{thm:main}), with the distinction being that we only have $x\ge \frac{1-\beta}{r}$ in this case, then yields
    \[
        -(r-1)^2 z \le (1-\beta)^2 \frac{1}{zr^2}-(1+2/\sqrt{z})\frac{1-\beta}{r}+2\sqrt{z}+\beta.
    \]
    Letting $z=\frac{(1-\beta)^2}{(r-\delta)^2}$ and further expanding and simplifying the above then gives
    \begin{align*}
        0 &\le (r-\delta)^4 +(r-1)^2 r^2 (1-\beta)^2-(1-\beta)r(r-\delta)^2-2(r-\delta)^3r+2(1-\beta)(r-\delta)r^2+\beta(r-\delta)^2 r^2\\
        &= (r-\delta^2)^2-r(r^2-\delta^2)(1-2\delta)+\beta((r-\delta)^2r^2+r(r-\delta)^2-2(r-\delta)r^2-(2-\beta)(r-1)^2 r^2) \\
        &\le (r-\delta^2)^2-r(r^2-\delta^2)(1-2\delta)+\beta r^2((r-\delta)^2-2(r-1)^2),
    \end{align*}
    where the last step uses the fact that $\beta\le \frac{1}{r-1}$. Since $r\ge 4$, the last term $\beta r^2((r-\delta)^2-2(r-1)^2)$ is negative, so we obtain $\delta > \frac{r-1}{2r}$ just like in the (fully colored) complete graph setting. Then we have $z\ge \frac{(1-\beta)^2}{(r-\frac{1}{2}+\frac{1}{2r})^2}\ge \frac{(1-\beta)^2}{r^2-r+\frac{5}{4}}$, as desired.
    
\end{proof}

The proof of \cite[Theorem~1.2]{sluo} likewise translates to the following result about the $3$-color case in the high minimum degree setting.

\begin{theorem}
Let $G$ be a graph on $n$ vertices where every vertex has degree at least $(1-\beta)n-1$, where $\beta \le \frac{1}{25}$. 
Then in every $3$-coloring of the edges of $G$, there is a monochromatic connected component with at least $\frac16 e(G)$ edges.
\end{theorem}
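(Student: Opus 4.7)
The plan is to follow the proof of \cite[Theorem~1.2]{sluo} for $3$-colorings of $K_n$, making the same kind of adjustments that were used in the previous theorem's proof to accommodate missing edges. I would view $G$ as a partial $3$-edge-coloring of $K_n$ in which the uncolored edges (those in $K_n$ but not in $G$) form a graph of maximum degree at most $\beta n$. Setting $z := e(H')/e(G)$ where $H'$ is the largest monochromatic component in $G$, the goal is to show $z \ge \frac{1}{6}$.

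Let red be the color with the most edges, with subgraph $R$ and components $R_1, \ldots, R_k$ ordered by decreasing vertex count, and set $x := e(R)/e(G) \ge \frac{1}{3}$. As in the previous theorem, I would first handle the case $v(R_1) \ge \frac{3}{4}n$ separately, using the minimum degree bound to control the edges of $G$ lying outside $V(R_1)$. In the main case, applying \cref{thm:rpart-comp} together with the smoothing lemma \cite[Lemma~3.2]{sluo} would give upper bounds on $\sum_i \binom{v(R_i)}{2}$ in terms of $x$ and $z$, just as in \cite{sluo}. Next, let $M$ be the complete multipartite graph on $V(G)$ with parts $V(R_1), \ldots, V(R_k)$; since each vertex of $G$ has at most $\beta n$ missing neighbors, $M \cap G$ contains all but at most $\beta\binom{n}{2}$ of the edges of $M$, and no edge of $M \cap G$ is red. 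Pigeonholing over the two non-red colors and applying \cref{thm:rpart-comp} then produces a non-red monochromatic component whose size can be bounded below in terms of $e(M)$ and $\beta$. Combining this with the bound on $\sum_i \binom{v(R_i)}{2}$ yields an inequality relating $x$, $z$, and $\beta$.

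The main obstacle is the final calculation: verifying that this inequality, which is a $\beta$-perturbation of the one appearing in \cite[Theorem~1.2]{sluo}, still forces $z \ge \frac{1}{6}$ exactly (with no $(1-\beta)$-type factor). The constraint $\beta \le \frac{1}{25}$ is the tight threshold that balances the slackness already present in \cite{sluo}'s $r=3$ argument against the error introduced by the missing edges. Crucially, the local minimum degree condition (rather than a global density condition as in the previous theorem) is essential here: it lets us control the number of $G$-edges between any two specified large vertex subsets without incurring additional $(1-\beta)$ factors, which is what preserves the exact constant $\frac{1}{6}$ rather than a $(1-\beta)^2$-weakened version. Tracking how $\beta$ propagates through each step and invoking the minimum degree condition whenever a specific edge-count between prescribed vertex sets is needed should yield the claimed bound under the threshold $\beta \le \frac{1}{25}$.
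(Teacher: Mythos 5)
Your overall plan will not reach the constant $\frac16$. The machinery you describe --- the smoothing lemma \cite[Lemma~3.2]{sluo}, the complete multipartite graph $M$ with parts $V(R_1),\dots,V(R_k)$, pigeonholing over the non-red colors, and a final inequality in $x$, $z$, and $\beta$ --- is the proof of the \emph{general-$r$} bound (\cref{thm:rKn} and the preceding theorem in this section), and even with $\beta=0$ that argument only yields $z\ge \frac{1}{r^2-r+\frac54}$, which for $r=3$ is $\frac{4}{29}<\frac16$. The bottleneck is intrinsic: the quadratic inequality in $\delta$ obtained at the end only forces $\delta>\frac{r-1}{2r}$, not $\delta\ge 1$, so no amount of care in tracking how $\beta$ propagates will recover the sharp constant. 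This is exactly why the $r=3$ case in \cite{sluo} needs a separate structural proof, and it is that structural proof that the paper adapts here.

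Concretely, the paper's argument runs as follows. (i) If some color forms a single component, then the other two colors carry at least $\frac56 e(G)$ edges, one of them at least $\frac{5}{12}e(G)$, and \cref{thm:rpart-comp} gives a component with at least $\frac{25}{144}(1-\beta)e(G)$ edges; the hypothesis $\beta\le\frac1{25}$ is precisely what makes this at least $\frac16 e(G)$ --- the threshold comes from here, not from balancing slack in a smoothing computation. (ii) Otherwise each color has at least two components; taking $H'$ to be the red component of largest average degree, one may assume $\frac{(1-\beta)n}{3}\le v(H')\le \frac n2$, and a case analysis on the bipartite graph between $V_1=V(H')$ and its complement (using the minimum degree hypothesis to show, for instance, that any two vertices of a green piece of $V_1$ share a common blue neighbor in the complement, so that the coloring is forced into a rigid structure with exactly two components of each color) shows that one of the six monochromatic components has at least $\frac16 e(G)$ edges. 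Your instinct that the \emph{local} degree condition is essential is correct, but it is deployed inside this structural case analysis, not inside the smoothing-lemma computation. As written, your proposal is missing the entire structural half of the argument, which is where the sharp constant actually comes from.
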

\begin{proof}
    Call the three colors red, green, and blue. Assume for the sake of contradiction that every monochromatic component has less than $\frac{1}{6}e(G)$ edges.
    
    First suppose that one of the colors (say, red) consists of exactly one component. Then the number of red edges is at most $\frac16e(G)$, so there are at least $\frac56e(G)$ total green and blue edges. Without loss of generality,
    there are at least $\frac5{12}e(G)$ green edges. By \cref{thm:rpart-comp}, there is a green component $H'$ with
    \[e(H')\ge \frac{(\frac{5}{12}e(G))^2}{{n\choose 2}} \ge \frac{25}{144}(1-\beta) e(G) \ge \frac16e(G),\]
    a contradiction.
    So, we can assume there are multiple components of each color. 
    Let red be the color with the most edges, so there are at least $\frac13 e(G)$ red edges.
    Let $H'$ be the red component with the largest average degree, so
    \[v(H')\ge \bar d(H')+1\ge \frac{1}{3}\bar d(G)+1\ge \frac{1}{3}(1-\beta)n.\]
    If $v(H')>\frac{n}{2}$, then $e(H')=\frac{1}{2}v(H')\bar d(H')\ge \frac{1}{6}e(G)$, so we can assume that $v(H')\le \frac{n}{2}$. 
    Let $V_1=V(H')$ and $V_2 = V(G)\setminus V_1$, and let $G'$ be the bipartite subgraph of $G$ consisting of all its edges between $V_1$ and $V_2$.
    
    Fix a green component $C_1$ of $G'$. If $|C_1\cap V_1|\ge |V_1|-3\beta n-1$, then the edges of $G'$ between $V_1\cap C_1$ and $V_2\setminus C_1$ are all blue.
    So, all the edges between $V_1\cap C_1$ and $V_2$ are split into two monochromatic components.
    One of those components has a number of edges that is at least 
    \[
    \frac 12 e_G(V_1\cap C_1, V_2) \ge \frac{1}{2}|V_1\cap C_1||V_2|-\beta (n-1)|V_1|\ge \frac{2}{9}(1-\beta)\binom{n}{2}-\beta \binom{n}{2}\ge  \frac 16 e(G),
    \]
    a contradiction. So, no green (or blue) component can cover all but at most $3\beta n$ vertices of $V_1$ (or, by symmetry, of $V_2$).
    
    Let $A_1=C_1\cap V_1$, $A_2=C_1\cap V_2$, $B_1 = V_1\setminus C_1$, and $B_2 = V_2\setminus C_1$.
    By assumption, $B_1$ and $B_2$ each have at least $3\beta n+1$ vertices.
    Since every vertex in $A_1$ is connected in blue to all but at most $\beta n$ vertices of $B_2$, by pigeonhole every pair of vertices in $A_1$ shares a neighbor in $B_2$.
    Thus, $A_1$ is contained in a single blue component. The same is true of $A_2$ by symmetry.
    Let $M_1$ (resp. $M_2$) be the subset of $B_1$ (resp. $B_2$) contained in the same blue component as $A_2$ (resp. $A_1$). Then $M_1$ and $M_2$ each have at least $2\beta n+1$ vertices, so the bipartite graph between $M_1$ and $M_2$ is connected.
    If any edge between $M_1$ and $M_2$ was blue, then there would be a blue component with more than $|V_1|-3\beta n-1$ vertices in $V_1$,
    a contradiction.
    So, $M_1\cup M_2$ is contained in a green component. Let $K$ be the subgraph of $G$ induced on $A_1 \cup A_2 \cup M_1 \cup M_2$, with its coloring induced from the coloring on $G$. Every edge between $A_1$ and $M_1$, or between $A_2$ and $M_2$, is red;
    every edge between $A_1$ and $A_2$, or between $M_1$ and $M_2$, is green; and every edge between $A_1$ and $M_2$, or between $A_2$ and $M_1$, is blue.

    Let $R=V(G)\setminus V(K)$, so $|R|\le 2\beta n$.
    Suppose for the sake of contradiction that there is another green component contained in $R$.
    Let $v$ be a vertex of $R$. Then, all of the edges between $v$ and $V(K)$ are red or blue.
    Without loss of generality, assume there is a red edge from $v$ to a vertex of $A_1$.
    Then, because $A_1$ is in a different red component from $A_2$ and $M_2$, the edges from $v$ to $A_2$ and $M_2$ are blue.
    However, $A_2$ and $M_2$ must be in different blue components, a contradiction.
    By similar logic, $R$ cannot contain any extra red or blue components.
    So, in $G$, there are exactly two components of each color, 
    and thus one of the $6$ components has at least $\frac16e(G)$ edges, as desired.
\end{proof}

\section{Acknowledgements}

This paper was written as part of the MIT PRIMES program. The authors would like to thank the organizers of the PRIMES program for their support and valuable feedback.

\end{document}